\allowdisplaybreaks \setlength{\topmargin}{-.5cm}
\newtheorem{theorem}{Theorem}[section]
\newtheorem{proposition}[theorem]{Proposition}
\theoremstyle{definition}
\newtheorem{definition}[theorem]{Definition}
\numberwithin{equation}{section}
\begin{document}

\setcounter{page}{1}

\title[Generalized g-Bessel multipliers]{Invertibility of generalized g-frame multipliers\\ in Hilbert spaces}

\author[M. Abolghasemi, Y. Tolooei \MakeLowercase{and} Z. Moosavianfard]{M. Abolghasemi, Y. Tolooei \MakeLowercase{and} Z. Moosavianfard}

\address{Department of Mathematics, Faculty of Sciences, Razi University, Kermanshah, Iran.}
\email{\textcolor[rgb]{0.00,0.00,0.84}{m$\_$abolghasemi@razi.ac.ir , Y. Toloei@razi.ac.ir;
moosaviyanfard.zahra66@yahoo.com}}



\begin{abstract}
In this paper, we investigate the invertibility of generalized g-Bessel multipliers.
We show that for semi-normalized symbols,
the inverse of any invertible generalized g-frame multiplier can be represented as
a generalized g-frame multiplier. Also we give several approaches
for constructing invertible generalized g-frame multipliers from the given one.
It is worth mentioning that some of our results are quite different
from those studied in the previous literatures on this topic.\\\\
\footnotesize\emph{\emph{2010} MSC}: 42C15, 47A05, 41A58.\\
\footnotesize\emph{Keywords: \emph{g-Bessel sequences, g-frames, generalized multipliers, perturbation.}}
\end{abstract}
\addtolength{\baselineskip}{1mm} \maketitle

\section*{Introduction}

In recent years there has been shown considerable interest by functional analysts
in the study of Bessel multipliers as a generalization of the frame operators, approximately dual frames \cite{japp}, generalized dual frames
\cite[Remark 2.8(ii)]{japp} and atomic systems for subspaces \cite{jatom}. In fact, the study of this class of operators leads us to new results concerning dual frames and
local atoms, two concepts at the core of frame theory.

The notions Fourier and Gabor
multipliers were extended to ordinary Bessel multipliers in Hilbert spaces by Balazs \cite{balaz3},
$p$-Bessel sequences in Banach spaces by Balazs and Rahimi in \cite{Rahi}, von Neumann-Schatten setting \cite{JLAA} and continuous setting in \cite{continu}. In \cite{Stoeva}, sufficient and/or necessary conditions for invertibility of ordinary Bessel multipliers have determined depending on
the properties of the analysis and synthesis sequences, as well as the symbol. Later on, in \cite{balaz11}, Stoeva and Balazs have considered the representation of the inverse of an ordinary frame multiplier. Moreover, the invertibility of Bessel multipliers in a much more general setting has been considered by Javanshiri
and his coauthor in \cite{JNF,JLAA}.

On the other hand, Rahimi \cite{rahim1} introduced
and studied the concept of Bessel multipliers for $g$-Bessel sequences in Hilbert spaces.
Recall that, $g$-Bessel sequences as an interesting generalization of ordinary Bessel sequence were first considered by Sun \cite{Sun,Sun2}.
It seems to the author that the invertibility of $g$-Bessel multipliers has not been touched so far. The reader will remark that, $g$-frames are quite different from ordinary frames; For example,
an exact $g$-frame in a Hilbert space is not equivalent to a $g$-Riesz basis, whereas
an exact frame is equivalent to a Riesz basis. This guarantees that
the study of $g$-frames and other related
concepts is more complicated than that of ordinary frames in Hilbert spaces.

Our purpose here is to consider the representation of the inverse of an invertible $g$-frame multiplier. For this purpose, we discuss a new result about the dual of $g$-frames. Moreover, in the case where the symbols is semi-normalized, we show that the inverse of any invertible $g$-frame multiplier can always be represented as a $g$-frame multiplier with the reciprocal symbol and dual $g$-frames of the given ones.
Finally, we investigate the matrix representation as well as the diagonalization of operators
on a Hilbert space $\mathcal U$ with $g$-frames.


\section{preliminaries}

In this section we have collected some notations
and results which are needed for the subsequent sections.
Throughout the paper $\mathcal{H}$ and $\mathcal K$ are separable Hilbert spaces; $\mathit{I}$ is a subset of $\mathbb{Z}$ and $\{\mathcal{K}_{i}\}_{i\in I}$ is a sequence of closed subspaces of $\mathcal K$. The notation $B( \mathcal{H}, \mathcal{K}_{i})$
denotes the Banach space of all bounded linear operators from $\mathcal{H}$ into $\mathcal{K}_{i}$; $Id_{\mathcal H}$ denotes the identity operator on $\mathcal H$; $\Lambda$ and $\Gamma$ are used to denote the sequences $\{\Lambda_i\}_{i\in I}$ and $\{\Gamma_i\}_{i\in I}$ with elements from $B({\mathcal H},{\mathcal K}_i)$, respectively. Moreover, we assume that  $\ell^2(\oplus_{i\in I}{\mathcal K}_i)$ is the Hilbert space
$$\ell^2(\oplus_{i\in I}{\mathcal K}_i)={\Big\{}\{x_i\}_{i\in I}:~\forall i~ x_i\in {\mathcal K}_i~ {\hbox{and}}~\sum_{i\in I}\|x_i\|^2<\infty{\Big\}},$$ with the inner product given by
$\big<\{x_i\}_{i\in I},\{y_i\}_{i\in I}\big>=\sum_{i\in I}\big<x_i,y_i\big>.$

Now, let us recall from \cite{} the definition of g-frame which includes the ordinary
frames and many recent generalizations of ordinary frames.

\begin{definition}
A sequence $\Lambda$ is called a generalized frame or simply
a g-frame for $\mathcal H$ with respect to
$\{{\mathcal K}_i\}_{i\in I}$ if there are two positive constants $A_{\Lambda}$ and $B_{\Lambda}$ such that
\begin{equation}\label{0839}
A_{\Lambda}\|x\|^2\leq\sum_{i\in I}\|\Lambda_ix\|^2\leq B_{\Lambda}\|x\|^2,
\quad\quad\quad (x\in{\mathcal H}).
\end{equation}
We call $A_{\Lambda}$ and $B_{\Lambda}$ the lower and upper frame bounds, respectively. In particular,
the sequence $\Lambda$ is called a $g$-Bessel
sequence, if only in {\rm(\ref{0839})} the second inequality holds.
\end{definition}

If $\Lambda$ is a $g$-Bessel sequence for $\mathcal H$ with respect to
$\{{\mathcal K}_i\}_{i\in I}$, then
$$T_{\Lambda}:\ell^2(\oplus_{i\in I}{\mathcal K}_i)\rightarrow{\mathcal H};\quad\{x_i\}_{i\in I}\mapsto\sum_{i\in I}\Lambda^*_i(x_i)$$ denote the associated synthesis operator. Its adjoint $T_{\Lambda}^*$ is called the analysis operator of $\Lambda$ which can be obtained as follows
$$T_{\Lambda}^*:{\mathcal H}\rightarrow\ell^2(\oplus_{i\in I}{\mathcal K}_i);\quad x\mapsto\{\Lambda_i(x)\}_{i\in I}.$$
It is proved that $\Lambda$ is a g-frame if and only if $T_{\Lambda}$ is a bounded operator which maps $\ell^2(\oplus_{i\in I}{\mathcal K}_i)$ surjectively onto $\mathcal H$. In particular, if $\Lambda$ is a g-frame, its frame operator given by
$$S_{\Lambda}(x):=T_{\Lambda}T_{\Lambda}^*(x)=\sum_{i\in I}\Lambda^*_i\Lambda_i(x)\quad\quad\quad(x\in{\mathcal H}),$$
is a bounded and positive self-adjoint operator in $B(\mathcal H)$, the Banach space of all bounded operators from $\mathcal H$ into $\mathcal H$.
This leads to the following
reconstruction formula
$$x=\sum_{i\in I}\Lambda^*_i\Lambda_iS_{\Lambda}^{-1}(x)=\sum_{i\in I}(\Lambda_iS_{\Lambda}^{-1})^*\Lambda_i(x),$$
for all $x\in{\mathcal H}$. As usual, the sequence $\widetilde{\Lambda}:=\{\Lambda_iS_{\Lambda}^{-1}\}_{i\in I}$ is called the canonical dual g-frame of $\Lambda$ which is a g-frame for $\mathcal{H} $ with frame operator $S_{\Lambda}^{-1} $ and frame bounds $B_{\Lambda}^{-1} $ and $A_{\Lambda}^{-1}$.

Finally, we recall from \cite{gene} that for two g-Bessel sequences $\Lambda$ and $\Gamma$ and a bounded operator $U:\ell^2(\oplus_{i\in I}{\mathcal K}_i)\longrightarrow \ell^2(\oplus_{i\in I}{\mathcal K}_i)$, the operator $M_{U,\Lambda,\Gamma}: \mathcal{H} \longrightarrow \mathcal{H}$ defined by
\begin{equation*}
M_{U,\Lambda,\Gamma}:=T_{\Lambda}U T^{*}_{\Gamma},
\end{equation*}
is called generalized multiplier of g-Bessel sequences $\Lambda $ and $\Gamma $ with symbol $U$.
Particularly, if for every $j\in I$, we define the operators $\pi_j$ and $\iota_j$ as
$$\pi_j:\ell^2(\oplus_{i\in I}{\mathcal K}_i)\rightarrow{\mathcal H}_j;\quad \{x_i\}_{i\in I}\mapsto x_j,$$
and
$$\iota_j:{\mathcal H}_j\rightarrow\ell^2(\oplus_{i\in I}{\mathcal K}_i);\quad x_j\mapsto\{\delta_{i,j}x_j\}_{i\in I},$$
where $\delta_{i,j}$ denotes the Kronecker delta, then the operator $M_{U,\Lambda,\Gamma}$ enjoys the following representation
\begin{equation}\label{parsa0900}
M_{U,\Lambda,\Gamma}(x)=\sum_{i\in I}\sum_{j\in I}{\Lambda}_i^*u_{ij}{\Gamma}_j(x),
\end{equation}
where $u_{ij}\in B(\mathcal{H}_{j}, \mathcal{H}_{i})$
defined by $u_{ij}(x_j)=\pi_iU\iota_j(x_j)$ and $U=[u_{ij}]$ is its matrix description.
We observe that, if one restricts the set of diagonal operators $U=diag\{u_i\}_{i\in I}$ with $u_i\in B(\mathcal{H}_{i}, \mathcal{H}_{i})$, then
formula (\ref{parsa0900}) becomes considerably simpler
\begin{equation}\label{parsa0910}
M_{U,\Lambda,\Gamma}(x)=\sum_{i\in I}{\Lambda}_i^*u_{i}{\Gamma}_i(x).
\end{equation}
Moreover, if for a weight $m=\{m_i\}_{i\in}\in\ell^\infty(I)$ we consider $u_i:{\mathcal H}_i\rightarrow{\mathcal H}_i$ by $u_i(x_i)=m_ix_i$, then (\ref{parsa0910}) reduces to
$$M_{m,\Lambda,\Gamma}(x)=\sum_{i\in I}m_i{\Lambda}_i^*{\Gamma}_i(x),$$
which has been studied by Rahimi \cite{rahim1} and in a much more general setting by Javanshiri and Choubin in \cite{JLAA}, where, here, and in the sequel $\ell^\infty(I)$ has its usual meanings.


\section{Some basic results on invertibility}

We commence this section by a discussion of why the invertibility of multipliers with the form Eq. (\ref{parsa0910}) is the main object of study of this paper.
To this end, first let us to note that, on the one hand, it is not hard to check that
the satisfying of g-Bessel sequences $\Lambda$ and $\Gamma$ in the lower g-frame condition are necessary for the invertibility of a generalized multiplier $M_{U,\Lambda,\Gamma}$ of the form Eq. (\ref{parsa0900}).
On the other hand, for given g-frames $\Lambda$ and $\Gamma$ there exists always
infinitely many non-injective operators $U\in B(\ell^2(\oplus_{i\in I}{\mathcal K}_i))$ such that the generalized multiplier $M_{U,\Lambda,\Gamma}=T_{\Lambda}U T^{*}_{\Gamma}$
is invertible whereas the injectivity of the operator $${\mathcal M}_m:\ell^2(I)\rightarrow\ell^2(I);\quad \{c_i\}_{i\in I}\mapsto\{m_ic_i\}_{i\in I},$$
was a very useful tool in the study of invertible ordinary Bessel multipliers, see for example \cite{}. Indeed, it suffices to set
$$U:=T_\Lambda^*S_\Lambda^{-1}S_{\Gamma}^{-1}T_\Gamma+P_{\ker(T_\Lambda)}R,$$
where $R$ is an arbitrary operator in $B(\ell^2(\oplus_{i\in I}{\mathcal K}_i))$.
This shows that there is too much freedom in the choice of the operator $U$ in
Eq. (\ref{parsa0900}) and it seems reasonable to work with particular classes of multipliers of the form Eq. (\ref{parsa0910}). Hence, in what follows $U$ refers to an operator in $B(\ell^2(\oplus_{i\in I}{\mathcal K}_i))$ which has the matrix description defined by $diag\{u_i\}_{i\in I}$ and $u_i\in B({\mathcal K}_i)$ for each $i\in I$.
Moreover, the letter semi-normalized is used for $U$ whenever
in addition to the invertibility of each $u_i$ ($i\in I$) the operator $D_U$ is also boundedly invertible, that is, the operator
$$D_{U^{-1}}:\ell^2(\oplus_{i\in I}{\mathcal K}_i)\rightarrow\ell^2(\oplus_{i\in I}{\mathcal K}_i);\quad \{x_i\}_{i\in I}\mapsto \{u_i^{-1}x_i\}_{i\in I}$$
is in $B(\ell^2(\oplus_{i\in I}{\mathcal K}_i))$.
It is worth mentioning that if $M_{U,\Lambda,\Gamma}$ is invertible for some g-Bessel sequence $\Gamma$, then routine calculations show that the g-Bessel sequences $\Lambda$, $\Gamma$, $U\Gamma:=\{u_i\Gamma_i\}_{i\in I}$ and $U\Lambda:=\{u_i^*\Lambda_i\}_{i\in I}$ must satisfy in the lower g-frame condition.


Our starting point is the following result which for fixed g-Bessel sequence $\Lambda$ and symbol $U$ characterizes all possible g-Bessel sequence $\Gamma$ that participate to construct invertible generalized multiplier $M_{U,\Lambda,\Gamma}$.

\begin{proposition}\label{minimal}
Let $\Lambda$ be a g-Bessel sequence for $\mathcal{H}$  with respect to $(\mathcal{K}_{i})_{i\in I}$ and let $U$ be a bounded operator on $\ell^2(\oplus_{i\in I}{\mathcal K}_i)$. The following assertions hold.
\begin{enumerate}
\item The g-Bessel sequences that participate to construct
invertible generalized multipliers with g-Bessel sequence $\Lambda$ and symbol $U$ are precisely the sequence $\Gamma$ satisfying
\begin{equation}\label{p0732}
u_i\Gamma_{i}=\Lambda_{i} S^{-1}_{\Lambda}T+\pi_{i}\Phi\quad\quad\quad\quad (i\in I)
\end{equation}
where $\Phi:\mathcal{H}\longrightarrow \ell^{2}(\oplus_{i}\mathcal{K}_{i})$ is an operator such that $T_{U}\Phi=0$ and $T$ is an invertible operator in $B(\mathcal{H})$.
\item The g-Bessel sequence $\Gamma$ participates to construct
invertible generalized multipliers with g-frame $\Lambda$ and symbol $U$
for which the analysis operator of $U\Gamma$ obtains the minimal norm if and only if
    $u_i\Gamma_{i}=\Lambda_{i} S^{-1}_{\Lambda}T$ ($i\in I$)
for some invertible operator $T$ in $B(\mathcal{H})$.
\end{enumerate}
\end{proposition}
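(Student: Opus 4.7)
My plan is to recast both parts via the bounded operator $\Psi := U T_\Gamma^* : \mathcal{H} \to \ell^2(\oplus_{i\in I}\mathcal{K}_i)$, $\Psi x = \{u_i\Gamma_ix\}_i$, whose $i$-th component equals $u_i\Gamma_i$. Since $M_{U,\Lambda,\Gamma} = T_\Lambda\Psi$, invertibility of the multiplier is equivalent to $T_\Lambda\Psi = T$ for some invertible $T \in B(\mathcal{H})$; as already observed in the discussion preceding the proposition, this forces $\Lambda$ to be a genuine g-frame, so that $S_\Lambda^{-1}$ is available throughout.

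For Part~(1), I would solve the operator equation $T_\Lambda\Psi = T$ using the self-adjoint idempotent $P := T_\Lambda^*S_\Lambda^{-1}T_\Lambda$, which is the orthogonal projection of $\ell^2(\oplus_{i\in I}\mathcal{K}_i)$ onto $\mathrm{ran}(T_\Lambda^*) = (\ker T_\Lambda)^\perp$. Because $T_\Lambda^*S_\Lambda^{-1}T \in \mathrm{ran}(T_\Lambda^*)$ is a particular solution, the general solution reads $\Psi = T_\Lambda^*S_\Lambda^{-1}T + \Phi$ with $T_\Lambda\Phi = 0$. Taking $\pi_i$-components yields (\ref{p0732}); conversely, substituting (\ref{p0732}) back into (\ref{parsa0910}) and applying the reconstruction formula gives $M_{U,\Lambda,\Gamma}(x) = Tx + T_\Lambda\Phi(x) = Tx$, which is invertible as required.

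For Part~(2), the analysis operator of $U\Gamma$ is exactly $T_{U\Gamma}^* = \Psi$. Since $T_\Lambda^*S_\Lambda^{-1}Tx \in \mathrm{ran}(T_\Lambda^*)$ and $\Phi x \in \ker T_\Lambda$ are orthogonal in $\ell^2(\oplus_{i\in I}\mathcal{K}_i)$, the Pythagorean identity
\begin{equation*}
\|T_{U\Gamma}^* x\|^2 \;=\; \|T_\Lambda^*S_\Lambda^{-1}Tx\|^2 + \|\Phi x\|^2 \qquad (x \in \mathcal{H})
\end{equation*}
shows that the analysis operator is minimized pointwise in $x$ precisely when $\Phi = 0$, equivalently when $u_i\Gamma_i = \Lambda_iS_\Lambda^{-1}T$ for every $i \in I$.

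I anticipate two delicate points. First, the ``if and only if'' in Part~(2) requires reading ``minimal norm'' in the strong pointwise sense above, since the plain operator-norm minimum would in general admit additional minimizers with $\Phi \neq 0$ (a maximizing direction for $T_\Lambda^* S_\Lambda^{-1} T$ on which $\Phi$ vanishes would preserve the operator norm). Second, the displayed constraint ``$T_U\Phi = 0$'' in (\ref{p0732}) must be read as $T_\Lambda\Phi = 0$, which is precisely what the reconstruction computation in Part~(1) demands; under the diagonal convention for $U$ adopted after (\ref{parsa0910}) these two readings are compatible, but this is the only conceptual ambiguity in the argument.
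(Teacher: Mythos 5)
Your Part~(1) reproduces the paper's own argument: the paper also sets $T=M_{U,\Lambda,\Gamma}$, defines $\Phi=UT_{\Gamma}^{*}-T_{\Lambda}^{*}S_{\Lambda}^{-1}T$, and checks $T_{\Lambda}\Phi=0$, which is exactly your decomposition of $\Psi=UT_{\Gamma}^{*}$ into the reduced solution $T_{\Lambda}^{*}S_{\Lambda}^{-1}T$ plus a kernel term; your reading of the condition ``$T_{U}\Phi=0$'' in \eqref{p0732} as $T_{\Lambda}\Phi=0$ is the one the paper's proof actually establishes. Part~(2) is where you genuinely diverge. The paper invokes Douglas' theorem for the factorization $T=T_{\Lambda}R$ to assert the existence of a \emph{unique} $R$ of minimal operator norm, identifies $R=T_{U\Gamma_{0}}^{*}$ via the Pythagorean identity of \cite[Lemma 2.1]{Sun2}, and computes the minimal value as $\bigl(\widetilde{A}_{\Lambda}\|T^{-1}\|^{2}\bigr)^{-1}$; you instead use the orthogonality of $\mathrm{ran}(T_{\Lambda}^{*})$ and $\ker(T_{\Lambda})$ to get the pointwise identity $\|T_{U\Gamma}^{*}x\|^{2}=\|T_{\Lambda}^{*}S_{\Lambda}^{-1}Tx\|^{2}+\|\Phi x\|^{2}$ and read minimality pointwise. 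Your cautionary remark here is substantive, not cosmetic: the operator-norm minimum is in general attained by infinitely many $\Phi\neq 0$ (any sufficiently small $\Phi$ vanishing on the norming directions of $T_{\Lambda}^{*}S_{\Lambda}^{-1}T$ leaves $\|T_{U\Gamma}^{*}\|$ unchanged), and Douglas' theorem only yields uniqueness of the \emph{reduced} solution with range in $\overline{\mathrm{ran}(T_{\Lambda}^{*})}$, not of the norm minimizer. So the ``only if'' of Part~(2), and the paper's uniqueness step, are valid only under your strengthened pointwise reading. What each approach buys: the paper's route produces an explicit numerical value for the minimal norm (used again in the equivalence theorem that follows), while yours produces a correct and essentially self-contained equivalence at the cost of reinterpreting ``minimal norm''; a fully satisfactory write-up would adopt your Pythagorean argument and either state the pointwise minimality explicitly or restrict the uniqueness claim to the reduced solution.
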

\begin{proof}
The backward implication of (1) being trivial, we give the proof of the direct implication
only. To this end, suppose that $\Gamma$ is a g-Bessel sequence such that $M_{U,\Lambda,\Gamma} $ is invertible. Put $T= M_{U,\Lambda,\Gamma}$ and define the operator $\Phi:\mathcal{H}\longrightarrow  \ell^{2}(\oplus_{i}\mathcal{K}_{i} )$ by
$$\Phi=UT^{*}_{\Gamma}-T^{*}_{\Lambda}S^{-1}_{\Lambda}T$$
Then we observe that
$$T_{\Lambda}\Phi=T_{\Lambda}UT^{*}_{\Gamma}-T_{\Lambda}
T^{*}_{\Lambda}S^{-1}_{\Lambda}T=0,$$
and this completes the proof of (1).

In order to prove (2) it suffices to show that for any g-frame $\Gamma$ which satisfies in \ref{p0732} we have
$$\|T_{U\Gamma}^*\|^2\geq\Big(\widetilde{A}_\Lambda\|{T}^{-1}\|^2\Big)^{-1},$$
and $U\Gamma_o:=\{\Lambda_{i} S^{-1}_{\Lambda}T\}_{i\in I}$ is the unique g-frame for which $$\|T_{U\Gamma_0}^*\|^2=\Big(\widetilde{A}_{\Lambda}\|T^{-1}\|^2\Big)^{-1},$$
where $T= M_{U,\Lambda,\Gamma}$.
To this end, by definition, we observe that
$$\|x\|^2\leq\frac{1}{A_\Lambda}\|T_{\Lambda}^*x\|^2\quad\quad\quad(x\in{\mathcal H}).$$
It follows that
\begin{equation}\label{0157}
\frac{1}{\widetilde{A}_\Gamma}=\inf\Big\{A:~\|x\|^2\leq A\|T_{\Lambda}^*x\|^2\quad\forall~x\in{\mathcal H}\Big\}.
\end{equation}
On the other hand, we have
\begin{align*}
\|T^*x\|^2&=\big<TT^*x,x\big>\\
&=\big<T_{\Lambda}T_{U\Gamma}^*T_{U\Gamma}T_{\Lambda}^*x,x\big>\\
&\leq\|T_{U\Gamma}^*\|^2\big<T_{\Lambda}T_{\Lambda}^*x,x\big>\\
&=\|T_{U\Gamma}^*\|^2\|T_{\Lambda}^*x\|^2.
\end{align*}
From this, by equality
$\|x\|\leq\|T^{-1}\|\|T^*x\|$, we deduce that
$$\|x\|^2\leq\|T^{-1}\|^2\|T_{U\Gamma}^*\|^2\|T_{\Lambda}(x)\|^2.$$
This together with (\ref{0157}) implies that
$$\|T^{-1}\|^2\|T_{U\Gamma}^*\|^2\geq\frac{1}{\widetilde{A}_\Lambda}.$$
In order to prove that $\Gamma_0$ is the unique g-frame for which $$\|T_{U\Gamma_0}^*\|^2=\Big(\widetilde{A}_\Lambda\|T^{-1}\|^2\Big)^{-1},$$
we first make use of Douglas' Theorem for surjective operators $T_{\Lambda}$ and $T$ and find that there exists a unique operator $R:{\mathcal H}\rightarrow\ell^{2}(\oplus_{i}\mathcal{K}_{i})$ of minimal norm for which $T=T_\Lambda R$, particularly, we have
$$\|R\|^2=\inf\Big\{B:~\|T^*x\|^2\leq B\|Rx\|^2\quad\forall~x\in{\mathcal H}\Big\}.$$
On the other hand, an argument similar to the proof of \cite[Lemma 2.1]{Sun2} shows that
if $x$ has a representation $Tx=\sum_{i\in I}\Lambda^*_i x_i$ for some sequence $\{x_i\}_{i\in I}\in \ell^{2}(\oplus_{i}\mathcal{K}_{i})$, then
$$\sum_{i\in I}\|x_i\|^2=\sum_{i\in I}\|\Lambda_iS_\Lambda^{-1}T\|^2+\sum_{i\in I}\|x_i-\Lambda_iS_\Lambda^{-1}T\|^2.$$
It follows that $R=T_{U\Gamma_0}^*$ and thus
\begin{align*}
\|T_{U\Gamma_0}^*\|^2&=\inf\Big\{B:~\|T^*x\|^2\leq B\|T_{\Lambda}^*x\|^2\quad\forall~x\in{\mathcal H}\Big\}\\
&=\inf\Big\{B:~\|x\|^2\leq B\|T^{-1}\|^2\|T_{\Lambda}^*x\|^2\quad\forall~x\in{\mathcal H}\Big\}\\
&=\frac{1}{\|T^{-1}\|^2}\inf\Big\{A:~\|x\|^2\leq A\|T_{\Lambda}^*x\|^2\quad\forall~x\in{\mathcal H}\Big\}\\
&=\frac{1}{\widetilde{A}_\Lambda\|T^{-1}\|^2}.
\end{align*}
We have now completed the proof of the proposition.
\end{proof}


Next we turn our attention to the characterization of g-frames $\Lambda$ that
participate to construct invertible generalized multiplier $M_{U,\Lambda,\Gamma}$ for given g-frame $\Gamma$ and certain symbol $U$. Here it should be noted that the class of symbol $U$ satisfying the property of the next result is quite rich. It
contains for instance all positive semi-normalized sequence $\{u_i\}_{i\in I}\subset B({\mathcal K}_i)$ and positive semi-normalized scaler sequence $\{m_i\}_{i\in I}\subset (0,\infty)$ as well.

\begin{proposition}
Let $\Gamma$ be a g-frame for $\mathcal{H}$ with respect to $\{\mathcal{K}_{i}\}_{i\in I}$
and let $U$ be a bounded operator on $\ell^2(\oplus_{i\in I}{\mathcal K}_i)$.
Assume also that the sequence $V=\{v_i\}_{i\in I}\subset B({\mathcal K}_i)$ is such that $u_i=v_i^*v_i$ ($i\in I$) and the sequence $V\Gamma:=\{v_i\Gamma_i\}_{i\in I}$ is a g-frame. Then $M_{U,\Gamma,\Gamma}$ is invertible and particularly, the g-frame $\Lambda$ participates to construct
invertible g-Bessel multiplier $M_{U,\Lambda,\Gamma}$ if and only if there is an operator  $\Psi:\ell^{2}(\oplus_{i}\mathcal{K}_{i})\longrightarrow\mathcal{H}$ and invertible operators $T_1, T_2\in B(\mathcal{H})$ such that
$$T_{\Lambda}=T_1 \Big(M^{-1}_{U,\Gamma,\Gamma}T_{\Gamma}+T_2^{-1}\Psi(Id_{\ell^{2}(\oplus_{i}\mathcal{K}_{i} )}-UT^{*}_{\Gamma}M^{-1}_{U,\Gamma,\Gamma}T_{\Gamma})\Big).$$
\end{proposition}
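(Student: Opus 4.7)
The plan is to reduce the proposition to one algebraic observation: the operator $P := UT_{\Gamma}^*M^{-1}_{U,\Gamma,\Gamma}T_{\Gamma}$ is an idempotent on $\ell^2(\oplus_{i\in I}\mathcal{K}_i)$ whose kernel exactly accounts for the freedom in choosing the synthesis operator $T_\Lambda$. First I would dispose of the invertibility of $M_{U,\Gamma,\Gamma}$: since $U$ is diagonal with entries $u_i=v_i^*v_i$, formula (\ref{parsa0910}) gives
$$M_{U,\Gamma,\Gamma}(x)=\sum_{i\in I}\Gamma_i^*v_i^*v_i\Gamma_i(x)=\sum_{i\in I}(v_i\Gamma_i)^*(v_i\Gamma_i)(x)=S_{V\Gamma}(x)\qquad(x\in\mathcal{H}),$$
so $M_{U,\Gamma,\Gamma}$ equals the g-frame operator of the g-frame $V\Gamma$ and is therefore boundedly invertible.

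Next I would set $A:=UT_{\Gamma}^*$ and $B:=M^{-1}_{U,\Gamma,\Gamma}T_{\Gamma}$. Because $T_{\Gamma}UT_{\Gamma}^*=M_{U,\Gamma,\Gamma}$ we have $BA=M^{-1}_{U,\Gamma,\Gamma}M_{U,\Gamma,\Gamma}=Id_{\mathcal{H}}$, which immediately forces $P=AB$ to be an idempotent on $\ell^2(\oplus_{i\in I}\mathcal{K}_i)$ and yields the key identity $(Id_{\ell^2(\oplus_i\mathcal{K}_i)}-P)A=A-A(BA)=0$. This is the single ingredient that drives both directions.

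For the backward implication, substituting the formula for $T_\Lambda$ into $M_{U,\Lambda,\Gamma}=T_\Lambda A$ and using $BA=Id_{\mathcal{H}}$ together with $(Id-P)A=0$ collapses the right-hand side to $T_1$, which is invertible by hypothesis; since $T_1$ is surjective and $T_\Lambda$ is bounded, surjectivity of $T_\Lambda$ (hence the g-frame property of $\Lambda$) is automatic. For the forward implication, assuming $M_{U,\Lambda,\Gamma}$ is invertible, my ansatz is $T_1:=M_{U,\Lambda,\Gamma}$, $T_2:=Id_{\mathcal{H}}$ and $\Psi:=T_1^{-1}T_\Lambda-B$. Then $\Psi A=T_1^{-1}(T_\Lambda A)-BA=T_1^{-1}T_1-Id_{\mathcal{H}}=0$, so $\Psi P=0$ and $\Psi(Id-P)=\Psi$, whence $T_1\bigl(B+T_2^{-1}\Psi(Id-P)\bigr)=T_1 B+(T_\Lambda-T_1 B)=T_\Lambda$ as required.

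The only genuine obstacle is the initial structural insight; once one notices that $BA=Id_{\mathcal{H}}$ and therefore that the affine family $\{B+C:CA=0\}$ parametrises all left inverses of $A$, the two implications are essentially forced, and the formula in the statement is just a convenient way to present this family with an auxiliary invertible prefactor $T_1$ (encoding which invertible operator $M_{U,\Lambda,\Gamma}$ should equal) and a harmless invertible $T_2$ absorbed into $\Psi$.
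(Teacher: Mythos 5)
Your proof is correct and follows essentially the same route as the paper: identify $M_{U,\Gamma,\Gamma}$ with the frame operator $S_{V\Gamma}$, then verify both directions by the algebraic identity $M^{-1}_{U,\Gamma,\Gamma}T_{\Gamma}\,UT_{\Gamma}^{*}=Id_{\mathcal H}$ (your $BA=Id_{\mathcal H}$, equivalently your $(Id-P)A=0$). The paper's witnesses in the forward direction are $T_1=T_2=M_{U,\Lambda,\Gamma}$ and $\Psi=T_\Lambda$, while yours are $T_2=Id_{\mathcal H}$ and $\Psi=T_1^{-1}T_\Lambda-B$; since $B(Id-P)=0$ these yield the same middle term, so the two arguments coincide up to your (pleasant) idempotent packaging.
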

\begin{proof}
That $M_{U,\Gamma,\Gamma}$ is invertible follows from the fact that it equal to the frame operator of g-frame $V\Gamma$. Now, suppose that $M_{U,\Lambda,\Gamma} $ is invertible and take $\Psi=T_{\Lambda}$ and $T_1=T_2= M_{U,\Lambda,\Gamma}$, then we observe that
\begin{align*}
T_{\Lambda}&=M_{U,\Lambda,\Gamma}M_{U,\Lambda,\Gamma}^{-1}T_{\Lambda}\\
&=M_{U,\Lambda,\Gamma}\Big(M^{-1}_{U,\Gamma,\Gamma}T_{\Gamma}+M_{U,\Lambda, \Gamma}^{-1}T_{\Lambda}-M^{-1}_{U,\Gamma,\Gamma}T_{\Gamma}\Big)\\
&=M_{U,\Lambda,\Gamma}\Big(M^{-1}_{U,\Gamma,\Gamma}T_{\Gamma}+M_{U,\Lambda, \Gamma}^{-1}T_{\Lambda}-M_{U,\Lambda, \Gamma}^{-1}T_{\Lambda}UT^{*}_{\Gamma}M^{-1}_{U,\Gamma,\Gamma}T_{\Gamma})\\
&=T_1\Big(M^{-1}_{U,\Gamma,\Gamma}T_{\Gamma}+T_2^{-1}\Psi(Id_{\ell^{2}(\oplus_{i}\mathcal{K}_{i} )}-UT^{*}_{\Gamma}M^{-1}_{U,\Gamma,\Gamma}T_{\Gamma})\Big)
\end{align*}
Conversely, suppose that $\Lambda$ is a g-frame for which
$$T_{\Lambda}=T_1\Big(M^{-1}_{U,\Gamma,\Gamma}T_{\Gamma}+
T_2^{-1}\Psi(Id_{\ell^{2}(\oplus_{i}\mathcal{K}_{i} )}-UT^{*}_{\Gamma}M^{-1}_{U,\Gamma,\Gamma}T_{\Gamma})\Big),$$
where $\Psi$ is an operator in $B(\ell^{2}(\oplus_{i}\mathcal{K}_{i}),\mathcal{H})$ and
$T_1,T_2\in B(\mathcal{H})$ are invertible operators. Then we have
\begin{align*}
M_{U,\Lambda,\Gamma}=T_{\Lambda}UT^{*}_{\Gamma}&=T_1\Big(M^{-1}_{U,\Gamma,\Gamma}
T_{\Gamma}+T_2^{-1}\Psi(Id_{\ell^{2}(\oplus_{i}\mathcal{K}_{i})}
-UT^{*}_{\Gamma}M^{-1}_{U,\Gamma,\Gamma}T_{\Gamma})\Big)UT^{*}_{\Gamma}\\
&=T_1\Big(M^{-1}_{U,\Gamma,\Gamma}T_{\Gamma}UT^{*}_{\Gamma}+T_2^{-1}\Psi
(UT^{*}_{\Gamma}-
UT^{*}_{\Gamma}M^{-1}_{U,\Gamma,\Gamma}T_{\Gamma}UT^{*}_{\Gamma})\Big)\\
&=T_1
\end{align*}
It follows that $ M_{U,\Lambda,\Gamma}$ is invertible.
\end{proof}


The proof of Theorem \ref{riesz} below which characterizes the invertibility of generalized g-Riesz multipliers relies on the following proposition.

\begin{proposition}\label{exce}
Let $\Lambda$ be a g-Bessel sequence for $\mathcal{H}$  with respect to $(\mathcal{K}_{i})_{i\in I}$ and let $U$ be a bounded operator on $\ell^2(\oplus_{i\in I}{\mathcal K}_i)$ which is also semi-normalized. Then the equality of the excess of g-frame $\Gamma$ with the excess of $\Lambda$, that is, $\dim(\ker(T_{\Gamma}))=\dim(\ker(T_{\Lambda}))$ is necessary for $\Gamma$ to participate to construct
invertible g-Bessel multipliers with g-frame $\Lambda$ and symbol $U$.
\end{proposition}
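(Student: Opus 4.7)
The plan is to produce a (generally non-orthogonal) topological direct-sum decomposition of $\ell^{2}(\oplus_{i\in I}\mathcal{K}_{i})$ that pairs $\ker T_{\Lambda}$ with (an image of) $\ker T_{\Gamma}$, then to transport it via $U^{-1}$, which is where the semi-normalization assumption comes in. The semi-normalization of $U$ is precisely the statement that $U=D_{U}$ is a bounded bijection on $\ell^{2}(\oplus_{i\in I}\mathcal{K}_{i})$ with bounded inverse $D_{U^{-1}}$, and we will use this decisively.

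Assume $M:=M_{U,\Lambda,\Gamma}=T_{\Lambda}UT_{\Gamma}^{*}$ is invertible. By the observation preceding this proposition, $\Lambda$ must then satisfy the lower g-frame condition, so (together with the hypothesis on $\Gamma$) both $T_{\Lambda}$ and $T_{\Gamma}$ are bounded surjections from $\ell^{2}(\oplus_{i\in I}\mathcal{K}_{i})$ onto $\mathcal{H}$. The first key step is to establish the (not necessarily orthogonal) direct sum
\[
\ell^{2}(\oplus_{i\in I}\mathcal{K}_{i}) \;=\; UT_{\Gamma}^{*}(\mathcal{H}) \;\dotplus\; \ker(T_{\Lambda}).
\]
For the intersection to be trivial, if $UT_{\Gamma}^{*}y\in\ker T_{\Lambda}$ then $My=T_{\Lambda}UT_{\Gamma}^{*}y=0$, so $y=0$. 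For exhaustion, given $z\in\ell^{2}(\oplus_{i\in I}\mathcal{K}_{i})$ set $y:=M^{-1}T_{\Lambda}z$; then $z-UT_{\Gamma}^{*}y\in\ker T_{\Lambda}$ by direct computation. Since $UT_{\Gamma}^{*}(\mathcal{H})$ is the image of the closed subspace $T_{\Gamma}^{*}(\mathcal{H})=\ker(T_{\Gamma})^{\perp}$ under the topological isomorphism $U$, both summands are closed.

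Next, apply the bounded bijection $U^{-1}$ to both sides. Using $U^{-1}UT_{\Gamma}^{*}(\mathcal{H})=T_{\Gamma}^{*}(\mathcal{H})=\ker(T_{\Gamma})^{\perp}$ (the latter equality because $T_{\Gamma}$ is surjective, hence $T_{\Gamma}^{*}$ has closed range equal to $(\ker T_{\Gamma})^{\perp}$), we obtain
\[
\ell^{2}(\oplus_{i\in I}\mathcal{K}_{i}) \;=\; \ker(T_{\Gamma})^{\perp} \;\dotplus\; U^{-1}\ker(T_{\Lambda}).
\]
Now I invoke the standard Hilbert-space fact that any two topological complements of a given closed subspace are Banach-isomorphic to the quotient, hence mutually isomorphic as Hilbert spaces; in particular $U^{-1}\ker(T_{\Lambda})$ and $\ker(T_{\Gamma})$ have the same Hilbert-space dimension. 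Finally, $U^{-1}$ restricts to a bounded bijection $\ker(T_{\Lambda})\to U^{-1}\ker(T_{\Lambda})$, so $\dim\ker(T_{\Lambda})=\dim U^{-1}\ker(T_{\Lambda})=\dim\ker(T_{\Gamma})$, which is the claimed equality of excesses.

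The main obstacle, such as it is, lies in carefully articulating the two non-orthogonal facts: that the decomposition $\ell^{2}=UT_{\Gamma}^{*}(\mathcal{H})\dotplus\ker T_{\Lambda}$ is genuinely topological (closedness of each summand plus triviality of intersection, which is where invertibility of $M$ enters), and that two topological complements of a closed subspace in a Hilbert space have the same dimension as their common orthogonal complement. Everything else is bookkeeping; the semi-normalization hypothesis is used only to ensure that $U$ is a bi-bounded bijection on $\ell^{2}(\oplus_{i\in I}\mathcal{K}_{i})$, which is exactly what permits the transport step.
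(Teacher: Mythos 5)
Your proof is correct, and while it rests on the same underlying geometry as the paper's, it packages it differently. The paper works with the oblique idempotent $V=Id-T^{*}_{\Gamma}M^{-1}T_{\Lambda}U$ (and its companion $V'$), observes that $V$ vanishes on $\mathrm{ran}(T^{*}_{\Gamma}M^{-1})=\mathrm{ran}(T^{*}_{\Gamma})$ and maps $\ker\big((T^{*}_{\Gamma}M^{-1})^{*}\big)=\ker(T_{\Gamma})$ onto $\ker(T_{\Lambda}U)$, and so obtains $\dim\ker(T_{\Lambda}U)\leq\dim\ker(T_{\Gamma})$; the reverse inequality comes from the symmetric construction with $V'$, and semi-normalization enters only through $\dim\ker(T_{\Lambda}U)=\dim\ker(T_{\Lambda})$. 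Your argument exhibits the same splitting directly as the topological direct sum $\ell^{2}(\oplus_{i}\mathcal{K}_{i})=UT^{*}_{\Gamma}(\mathcal{H})\dotplus\ker(T_{\Lambda})$ (which is the $U$-image of the paper's implicit decomposition $\mathrm{ran}(T^{*}_{\Gamma})\dotplus\ker(T_{\Lambda}U)$), transports it by $U^{-1}$, and then invokes the fact that two topological complements of $(\ker T_{\Gamma})^{\perp}$ are Banach-isomorphic and hence of equal Hilbert dimension --- this yields the equality in one stroke instead of two one-sided inequalities. What your route buys is transparency: the role of semi-normalization (bi-bounded transport by $U^{-1}$) and the exact point where invertibility of $M$ is used (trivial intersection and exhaustion) are cleanly isolated, at the cost of appealing to the complement-isomorphism lemma, which in the separable setting of the paper is harmless. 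All the supporting steps you flag --- closedness of $T^{*}_{\Gamma}(\mathcal{H})=(\ker T_{\Gamma})^{\perp}$ via surjectivity of $T_{\Gamma}$, which in turn follows from the invertibility of $M_{U,\Lambda,\Gamma}$ as noted before the proposition --- check out.
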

\begin{proof}
If we define  $V:\ell^{2}(\oplus\mathcal{K}_{i})\longrightarrow\ell^{2}(\oplus \mathcal{K}_{i})$ by
\begin{eqnarray*}
V\{x_{i}\}_{i\in I}:=(Id_{\ell^{2}(\oplus_i\mathcal{K}_{i})}-T^{*}_{\Gamma}M_{U, \Lambda,\Gamma}^{-1}T_{\Lambda}U)\{x_{i}\}_{i\in I},
\end{eqnarray*}
then it is not hard to check that
\begin{eqnarray*}
ran(V)=\ker (T_{\Lambda}U)
\end{eqnarray*}
On the other hand, using the equality
\begin{eqnarray*}
ran(T^{*}_{\Gamma}M_{U,\Lambda,\Gamma}^{-1})\oplus \ker((T^{*}_{\Gamma}M_{U, \Lambda,\Gamma}^{-1})^{*})=\ell^{2}(\oplus \mathcal{K}_{i})
\end{eqnarray*}
and the equality $T_{\Lambda}UT^{*}_{\Gamma}M_{U,\Lambda,\Gamma}^{-1}=Id_{\mathcal{H}}$  we see that
\begin{eqnarray*}
ran(V)=V(\ker((T^{*}_{\Gamma}M_{U, \Lambda,\Gamma}^{-1})^{*}))
\end{eqnarray*}
Hence,we have
\begin{align*}
\dim(\ker(T_{\Lambda}))&=\dim (U^{-1}(\ker(T_{\Lambda}))\\
&=\dim(\ker(T_{\Lambda}U))\\
&=\dim(V(\ker((T^{*}_{\Gamma}M_{U,\Lambda,\Gamma}^{-1})^{*}))\\
&\leq\dim(\ker((T^{*}_{\Gamma}M_{U,\Lambda,\Gamma}^{-1})^{*}))\\
&=\dim(\ker((M_{U,\Lambda,\Gamma}^{*})^{-1}T_{\Gamma})\\
&=\dim(\ker(T_{\Gamma})).
\end{align*}
Similarly, if we define $V':=Id_{\ell^{2}(\oplus_i \mathcal{K}_{i})}-U^{*}T^{*}_{\Lambda}(M_{U,\Lambda,\Gamma}^{*})^{-1}T_{\Gamma}$, then, using the equality
$$(M_{U,\Lambda,\Gamma}^{*})^{-1}T_{\Gamma}U^{*}T^{*}_{\Lambda}=Id_{\ell^{2}(\oplus_i \mathcal{K}_{i})},$$
one can show that
\begin{eqnarray*}
\dim(\ker(T_{\Gamma}))\leq \dim(\ker(T_{\Lambda}))
\end{eqnarray*}
We have now completed the proof of proposition.
\end{proof}


The following result completely characterizes the invertibility of generalized multiplier $M_{U,\Lambda,\Gamma}$
when one of the sequences is a g-Riesz basis.

\begin{theorem}\label{riesz}
Let $\Lambda$ be a g-Riesz basis for $\mathcal{H}$  with respect to $\{\mathcal{K}_{i}\}_{i\in I}$ and let $U$ be a bounded operator on $\ell^2(\oplus_{i\in I}{\mathcal K}_i)$. Then the following assertion hold.
\begin{enumerate}
\item If $U$ is semi-normalized, then $M_{U,\Lambda,\Gamma}$ is invertible if and only if $\Gamma$ is a g-Riesz basis.
\item If $\Gamma$ is a g-Riesz basis, then
\begin{enumerate}
\item the mapping $U\mapsto M_{U,\Lambda,\Gamma}$ from $B(\ell^{2}(\oplus_{i}\mathcal{K}_{i}))$ into $B({\mathcal H})$ is injective;
\item $M_{U,\Lambda,\Gamma}$ is invertible if and only if $U$ is semi-normalized;
\item if $U$ is semi-normalized, then $M_{U,\Lambda,\Gamma}^{-1}=M_{U^{-1},\widetilde{\Gamma},\widetilde{\Lambda}}$;
\item if $\{\xi_{j}\}_{j}$ refers to the orthonormal basis of $\ell^{2}(\oplus_{i}\mathcal{K}_{i})$ and
$K=\sup_{j}\|U(\xi_{j})\|$, then we have
$$K\sqrt{A_{\Lambda}A_{\Gamma}}\leq\|M_{U,\Lambda, \Gamma}\|\leq \sqrt{B_{\Lambda}B_{\Gamma}}\|U\|.$$
\end{enumerate}
\end{enumerate}
\end{theorem}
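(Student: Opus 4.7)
The argument will hinge on the single observation that, when $\Lambda$ is a g-Riesz basis, the synthesis operator $T_\Lambda:\ell^2(\oplus_{i\in I}\mathcal{K}_i)\to\mathcal{H}$ is boundedly invertible with $\sqrt{A_\Lambda}\|c\|\le\|T_\Lambda c\|\le\sqrt{B_\Lambda}\|c\|$, and similarly for $T_\Gamma$ when $\Gamma$ is g-Riesz. This turns the factorization $M_{U,\Lambda,\Gamma}=T_\Lambda UT_\Gamma^{*}$ into a sandwich of $U$ between two bijective isomorphisms, and every claim reduces to transporting properties through these isomorphisms. I will handle the four items in order.

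For (1), the backward implication is immediate: each of $T_\Lambda$, $U$ (by semi-normalization), and $T_\Gamma^{*}$ is a bounded bijection, hence so is the composition $M_{U,\Lambda,\Gamma}$. For the forward direction I will first invoke the preliminary remark from the start of Section~2 that invertibility of $M_{U,\Lambda,\Gamma}$ forces $\Gamma$ to satisfy the lower g-frame bound, thereby promoting $\Gamma$ from a g-Bessel sequence to a g-frame; Proposition~\ref{exce} then gives $\dim\ker T_\Gamma=\dim\ker T_\Lambda=0$, and since $\Gamma$ is already surjective as a g-frame, $T_\Gamma$ becomes bijective, which is precisely what it means for $\Gamma$ to be a g-Riesz basis.

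Parts (2a) and (2b) are essentially bookkeeping. Injectivity of $U\mapsto T_\Lambda UT_\Gamma^{*}$ is immediate from the bijectivity of $T_\Lambda$ and $T_\Gamma^{*}$, while the identity $U=T_\Lambda^{-1}M_{U,\Lambda,\Gamma}(T_\Gamma^{*})^{-1}$ converts invertibility of the multiplier directly into invertibility of $U$. The only subtle point is upgrading ``$U=diag\{u_i\}_{i\in I}$ is invertible as a bounded operator'' to ``$U$ is semi-normalized'': testing $U$ and $U^{-1}$ on the embeddings $\iota_j$ shows that every $u_j$ is a bijection on $\mathcal{K}_j$ and that $U^{-1}$ must itself be the diagonal operator with entries $u_j^{-1}$, which therefore automatically supplies the bounded $D_{U^{-1}}$.

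Part (2c) is a direct composition: $M_{U,\Lambda,\Gamma}M_{U^{-1},\widetilde\Gamma,\widetilde\Lambda}=T_\Lambda U\,T_\Gamma^{*}T_{\widetilde\Gamma}\,U^{-1}T_{\widetilde\Lambda}^{*}$ collapses via the g-Riesz identities $T_\Gamma^{*}T_{\widetilde\Gamma}=T_\Gamma^{*}S_\Gamma^{-1}T_\Gamma=T_\Gamma^{-1}T_\Gamma=Id$ and $T_\Lambda T_{\widetilde\Lambda}^{*}=S_\Lambda S_\Lambda^{-1}=Id$, and the reverse composition is symmetric. For (2d) the upper bound is submultiplicativity together with $\|T_\Lambda\|\le\sqrt{B_\Lambda}$ and $\|T_\Gamma^{*}\|\le\sqrt{B_\Gamma}$. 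The lower bound is the only genuinely numerical step: setting $x_j:=(T_\Gamma^{*})^{-1}\xi_j$ yields $\|x_j\|\le 1/\sqrt{A_\Gamma}$ from the Riesz lower bound on $T_\Gamma^{*}$, while $\|M_{U,\Lambda,\Gamma}x_j\|=\|T_\Lambda U\xi_j\|\ge\sqrt{A_\Lambda}\|U\xi_j\|$, so $\|M_{U,\Lambda,\Gamma}\|\ge\sqrt{A_\Lambda A_\Gamma}\|U\xi_j\|$ for each $j$, and the supremum over $j$ gives the stated estimate. The main obstacle I foresee is the forward half of (1), where one must first upgrade $\Gamma$ from g-Bessel to g-frame before Proposition~\ref{exce} is applicable; once that step is in hand, the isomorphism viewpoint renders the remaining items routine.
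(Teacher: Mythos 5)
Your proposal is correct and follows essentially the same route as the paper: Proposition~\ref{exce} for the forward half of (1), the bijectivity of $T_\Lambda$ and $T_\Gamma^{*}$ for (2a)--(2c), and the test vectors $(T_\Gamma^{*})^{-1}\xi_j$ for the lower bound in (2d). You even patch two small gaps the paper leaves implicit --- upgrading $\Gamma$ from g-Bessel to g-frame before invoking Proposition~\ref{exce}, and checking that an invertible diagonal symbol is automatically semi-normalized --- so no changes are needed.
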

\begin{proof}
(1) First note that by Proposition \ref{exce} the invertibility of $M_{U,\Lambda,\Gamma}$ together with the fact that $U$ is semi-normalized, we have
$$\dim(\ker(T_{\Gamma}))=\dim(\ker(T_{\Lambda})).$$
From this, we can deduce that $\ker(T_\Gamma)$ is isomorphic to $\ker(T_\Lambda)=\{0\}$. It follows that the operator $T_\Gamma$ is injective and thus it is invertible. This means that $\Gamma$ is a g-Riesz basis. By biorthogonality of the sequences $\widetilde{\Lambda}$, $\Lambda$ and $\widetilde{\Gamma}$, $\Gamma$ the backward implication is trivial. In fact, $M_{U^{-1},\widetilde{\Gamma},\widetilde{\Lambda }}$ is the inverse of $M_{U,\Lambda,\Gamma}$.

To prove part (a) of (2), suppose that $M_{U_1,\Lambda,\Gamma}=M_{U_2,\Lambda,\Gamma}$. If $U:=U_1-U_2\neq 0$, then there exists $\{x_i\}_{i\in I}\in {\ell^{2}(\oplus_{i}\mathcal{K}_{i})}$ such that $U\{x_i\}_{i\in I}\neq 0$. We now invoke the surjectivity of the operator $T_\Gamma^*$ to conclude that there exists $x\in{\mathcal H}$ for which $T_\Gamma^*(x)=\{x_i\}_{i\in I}$. It follows that $UT_\Gamma^*(x)\neq 0$ whereas $T_\Lambda UT_\Gamma^*(x)=0$. Hence, we have
$$\big<T_\Lambda UT_\Gamma^*(x),y\big>=0\quad\quad\quad(y\in{\mathcal H}).$$
This means that $0\neq UT_\Gamma^*(x)\perp ran(T_\Lambda^*)={\mathcal H}$ which is a contradiction.

Now suppose that $M_{U,\Lambda,\Gamma}$ is invertible, then we observe that
\begin{align*}
UT^{*}_{\Gamma}M_{U,\Lambda,\Gamma}^{-1}T_{\Lambda}&=T^{*}_{\widetilde{\Lambda }}T_{\Lambda} UT^{*}_{\Gamma}M_{U,\Lambda,\Gamma}^{-1}T_{\Lambda}\\
&=T^{*}_{\widetilde{\Lambda }}M_{U,\Lambda,\Gamma}M_{U,\Lambda,\Gamma}^{-1}T_{\Lambda}\\
&=T^{*}_{\widetilde{\Lambda }}T_{\Lambda}\\
&=Id_{\ell^{2}(\oplus_{i}\mathcal{K}_{i})}.
\end{align*}
This together with part (1) and its proof proves parts (b) and (c).

Finally, in order to prove part (d) of (2), suppose that
$\xi_{i_{0}}$  is an arbitrary element of $\{\xi_{j}\}_{j}$. The surjectivity of the operator $T^{*}_{\Gamma}$ implies that there exists $x_{0}\in\mathcal{H}$ such that $T^{*}_{\Gamma}(x_{0})=\xi_{i_{0}}$. Hence, we have
\begin{eqnarray*}
\sqrt{A_{\Gamma}}\|x_{0}\|\leq\|T^{*}_{\Gamma}(x_{0})\|=\|\xi_{i_{0}}\|=1,
\end{eqnarray*}
and thus
\begin{align*}
\|M_{U,\Lambda,\Gamma}\|
=\sup_{x\in{\mathcal H}, x\neq 0}\dfrac{\|T_{\Lambda}UT^{*}_{\Gamma}(x)\|}{\|x\|}
\geq\dfrac{\|T_{\Lambda}UT^{*}_{\Gamma}(x_{0})\|}{\|x_{0}\|}
=\sqrt{A_{\Lambda}A_{\Gamma}}\|U(\xi_{i_{0}})\|.
\end{align*}
We have now completed the proof of proposition.
\end{proof}


We conclude this section by the following two results on the representation of the inverse of a generalized g-frame multiplier. The first one looks for a unique dual g-frame $\Gamma^\dag$ of $\Gamma$ such that for any dual g-frame $\Lambda^d$ of $\Lambda$ the inverse of $M_{U,\Lambda,\Gamma}$ can be represented using the
diagonal operator $U^{-1}$, $\Gamma^\dag$ and $\Lambda^d$, that is, $M_{U,\Lambda,\Gamma}^{-1}$ is again a generalized g-Bessel multiplier. The second one investigates invertible generalized g-frame multipliers $M_{U,\Lambda,\Gamma}$ whose inverses can be written as $M_{U^{-1},\widetilde{\Gamma},\widetilde{\Lambda}}$.

\begin{theorem}\label{uni-dual}
Let $ \Lambda $ and $ \Gamma $ be g-frames for $\mathcal{H}$  with respect to $\{\mathcal{K}_{i}\}_{i\in I}$ and let $U$ be semi-normalized. If $M_{U,\Lambda,\Gamma}$ is invertible, then there exists a unique dual g-frame $\Gamma^{\dagger}=\{\Gamma^{\dagger}_{i}\}_{i\in I}$ of $\Gamma$ such that
\begin{equation*}
M_{U,\Lambda,\Gamma}^{-1}=M_{U^{-1},\Gamma^{\dagger},\Lambda^{d}}.
\end{equation*}
for all dual g-frame $\Lambda^{d}$ of $\Lambda$.
\end{theorem}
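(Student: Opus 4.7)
The plan is to reverse-engineer $\Gamma^\dag$ from the identity it must satisfy, verify the duality and the representation formula by direct calculation, and then establish uniqueness by exploiting the ``for all $\Lambda^d$'' quantifier. Setting $M:=M_{U,\Lambda,\Gamma}$, the desired equality $M^{-1}=T_{\Gamma^\dag}U^{-1}T_{\Lambda^d}^*$ must hold for every dual g-frame $\Lambda^d$ of $\Lambda$. Since $T_\Lambda T_{\Lambda^d}^*=Id_{\mathcal H}$, one may write $M^{-1}=M^{-1}T_\Lambda\cdot T_{\Lambda^d}^*$, and this forces the candidate $T_{\Gamma^\dag}:=M^{-1}T_\Lambda U$. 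Reading off the coordinates of $T_{\Gamma^\dag}^{*}$ suggests
$$\Gamma^\dag_i:=u_i^*\Lambda_i(M^{-1})^*,\qquad i\in I.$$

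With this definition, $\Gamma^\dag$ is automatically g-Bessel: since $U$ is bounded we have $\sup_i\|u_i\|\le\|U\|<\infty$, and $\Lambda$ is g-Bessel while $(M^{-1})^*$ is bounded, so
$$\sum_{i\in I}\|\Gamma^\dag_i(x)\|^2\le\|U\|^2\,B_\Lambda\,\|M^{-1}\|^2\|x\|^2.$$
The duality identity collapses to a one-line computation: $T_{\Gamma^\dag}T_\Gamma^*=M^{-1}T_\Lambda UT_\Gamma^*=M^{-1}M=Id_{\mathcal H}$, so $\Gamma^\dag$ is a dual g-Bessel sequence of the g-frame $\Gamma$, which automatically upgrades it to a g-frame (apply $T_\Gamma T_{\Gamma^\dag}^*=Id$ to any $x$). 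Moreover, for every dual g-frame $\Lambda^d$ of $\Lambda$,
$$M_{U^{-1},\Gamma^\dag,\Lambda^d}=T_{\Gamma^\dag}U^{-1}T_{\Lambda^d}^*=M^{-1}T_\Lambda\,U U^{-1}\,T_{\Lambda^d}^*=M^{-1}T_\Lambda T_{\Lambda^d}^*=M^{-1},$$
using that $U$ is semi-normalized (so $UU^{-1}=Id_{\ell^2(\oplus_i\mathcal K_i)}$) and that $\Lambda^d$ is a dual of $\Lambda$.

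The main obstacle is uniqueness, where the ``for all $\Lambda^d$'' quantifier is crucial. Suppose $\Gamma'$ is another dual g-frame of $\Gamma$ for which $M_{U^{-1},\Gamma',\Lambda^d}=M^{-1}$ holds for every dual g-frame $\Lambda^d$ of $\Lambda$, and set $A:=T_{\Gamma'}-T_{\Gamma^\dag}$. Then $AU^{-1}T_{\Lambda^d}^*=0$ for all such $\Lambda^d$. Taking $\Lambda^d=\widetilde{\Lambda}$ yields $AU^{-1}|_{\operatorname{ran}(T_{\widetilde\Lambda}^*)}=0$; since $T_{\widetilde\Lambda}=S_\Lambda^{-1}T_\Lambda$ gives $\ker(T_{\widetilde\Lambda})=\ker(T_\Lambda)$, we have $\operatorname{ran}(T_{\widetilde\Lambda}^*)=\ker(T_\Lambda)^\perp$. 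For the complementary direction, given any $y\in\ker(T_\Lambda)$ and any nonzero $x_0\in\mathcal H$, the perturbation $V(x):=\langle x,x_0\rangle\|x_0\|^{-2}y$ is a bounded operator $\mathcal H\to\ell^2(\oplus_i\mathcal K_i)$ with $T_\Lambda V=0$, so $T_{\widetilde\Lambda}^*+V$ is the analysis operator of a dual g-frame $\Lambda^d$ of $\Lambda$; substituting and cancelling the $T_{\widetilde\Lambda}^*$-term yields $AU^{-1}y=0$. Combining both pieces, $AU^{-1}=0$ on all of $\ell^2(\oplus_i\mathcal K_i)$; since $U$ is semi-normalized, $U^{-1}$ is boundedly invertible, forcing $A=0$ and hence $\Gamma'=\Gamma^\dag$ coordinatewise (apply the equality $T_{\Gamma'}=T_{\Gamma^\dag}$ to $\iota_j(y)$).
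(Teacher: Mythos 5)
Your proposal is correct and follows essentially the same route as the paper: you construct the identical candidate $\Gamma^{\dagger}_i=u_i^*\Lambda_i(M_{U,\Lambda,\Gamma}^{-1})^*$, verify duality and the representation by the same computation $T_{\Gamma^\dagger}=M^{-1}T_\Lambda U$, and prove uniqueness by testing against the canonical dual together with rank-one perturbed duals supported in $\ker(T_\Lambda)$, exactly as the paper does with its operator $\Psi(x)=\langle x,e\rangle\{z_i\}$. Your packaging of the uniqueness step as the decomposition $\ell^2(\oplus_i\mathcal{K}_i)=\ker(T_\Lambda)^\perp\oplus\ker(T_\Lambda)$ is a cleaner way to organize the same argument, but it is not a genuinely different proof.
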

\begin{proof}
The existence of $\Gamma^\dagger$ follows from the fact that
$$x=M_{U,\Lambda,\Gamma}^*(M_{U,\Lambda,\Gamma}^{-1})^*=\sum_{i\in I}\Gamma_i^*u_i^*\Lambda_i(M_{U,\Lambda,\Gamma}^{-1})^*.$$
In detail, $\Gamma^\dagger:=\{u_i^*\Lambda_i(M_{U,\Lambda,\Gamma}^{-1})^*\}_{i\in I}$
is a dual g-frame of $\Gamma$ for which $M_{U,\Lambda,\Gamma}^{-1}T_\Lambda=T_{\Gamma^\dagger}U^{-1}$ and we get
$$M_{U,\Lambda,\Gamma}^{-1}=M_{U,\Lambda,\Gamma}^{-1}T_\Lambda T^*_{\Lambda^d}=T_{\Gamma^\dagger}U^{-1}T^*_{\Lambda^d},$$
for all dual g-frame $\Lambda^d$ of $\Lambda$ as a consequence.
Let us now prove the uniqueness of $\Gamma^\dagger$,
which is the essential part of the theorem.
To this end, suppose that $\Gamma^\ddag$ is another dual g-frame of  $\Gamma$ for which $M_{U,\Lambda,\Gamma}^{-1}=M_{U^{-1},\Gamma^\ddag,\Lambda^{d}}$, then we would have the following equality
 \begin{eqnarray}\label{parsagelar}
T_{\Lambda^{d}}(U^{-1})^{*}T^{*}_{\Gamma^{\dagger}}=T_{\Lambda^{d}}
(U^{-1})^{*}T^{*}_{\Gamma^\ddag},
\end{eqnarray}
for all dual g-frame $\Lambda^d$ of $\Lambda$.
From this we deduce that  $(U^{-1})^{*}T^{*}_{\Gamma^{\dagger}}=(U^{-1})^{*}T^{*}_{\Gamma^\ddag}$ and thus the invertibility of $U$ implies that $\Gamma^\ddag=\Gamma^{\dagger}$.
Indeed, if for arbitrary $x$ in $\mathcal H$ we set
$$\{x_i\}_{i\in I}:=(U^{-1})^{*}T^{*}_{\Gamma^{\dagger}}(x)\quad\quad\quad{\hbox{and}}\quad\quad\quad
(U^{-1})^{*}T^{*}_{\Gamma^\ddag}(x):=\{y_i\}_{i\in I},$$
then equality \ref{parsagelar} implies that $T_{\Lambda^d}\{x_i-y_i\}_{i\in I}=0$
for all dual g-frame $\Lambda^d$ of $\Lambda$. Specially, for canonical dual $\widetilde{\Lambda}$ of $\Lambda$ we have
$$S_\Lambda^{-1}T_\Lambda\{x_i-y_i\}_{i\in I}=T_{\widetilde{\Lambda}}\{x_i-y_i\}_{i\in I}=0$$
and thus $T_\Lambda\{x_i-y_i\}_{i\in I}=0$.
If now for arbitrary $\{z_i\}_{i\in I}\in\ell^2(\oplus_i{\mathcal K}_i)$ and a fixed $e\in{\mathcal H}$ with $\|e\|=1$ we define
$$\Psi:{\mathcal H}\rightarrow\ell^2(\oplus_i{\mathcal K}_i);\quad x\mapsto\big<x,e\big>\{z_i\}_{i\in I},$$
then we observe that
\begin{align*}
\big<\{x_i-y_i\}_{i\in I},\{z_i\}_{i\in I}\big>&=\big<\{x_i-y_i\}_{i\in I},\Psi(e)\big>\\
&=\big<\Psi^*P_{\ker(T_\Lambda)}\{x_i-y_i\}_{i\in I},e\big>\\
&=\big<(T_{\widetilde{\Lambda}}+\Psi^*P_{\ker(T_\Lambda)})\{x_i-y_i\}_{i\in I},e\big>\\
&=\big<(T_{\Lambda_e^d}\{x_i-y_i\}_{i\in I},e\big>\\
&=0
\end{align*}
where $\Lambda_e^d:=\widetilde{\Lambda}_i+\pi_i\Psi^*P_{\ker(T_\Lambda)}$ is a dual g-frame of $\Lambda$. This means that $$\{x_i-y_i\}_{i\in I}\perp\ell^2(\oplus_i{\mathcal K}_i)$$
and therefore we should have $\{x_i\}_{i\in I}=\{y_i\}_{i\in I}$.
\end{proof}


Having reached this state it remains to find conditions guaranteeing the equality of $\Gamma^\dag$ with $\widetilde{\Gamma}$. This is the subject matter of the next result.

\begin{theorem}
Suppose that $\Lambda$ and $\Gamma$ are g-frames for $\mathcal{H}$ and that $U$ is semi-normalized. The following statements are equivalent.
\begin{enumerate}
\item $M_{U,\Lambda,\Gamma}$ is invertible and the unique g-frame $\Gamma^{\dagger}$ in Theorem \ref{uni-dual} is $\widetilde{\Gamma}$.
\item The optimal upper frame bound of the g-frame $\Gamma^{\dagger}$ is $\Big(\widetilde{A}_{\Gamma}\Big)^{-1}$.
\item $U\Gamma$ and $\Lambda$ are $Q$-equivalent g-frames.
\item The analysis operator of $U\Gamma$ obtains the
minimal norm, that is $$\|T^*_{U\Gamma}\|^2=\Big(\widetilde{A}_{\Gamma}\|Q^{-1}\|^2\Big)^{-1}.$$
\end{enumerate}
\end{theorem}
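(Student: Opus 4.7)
My plan is to establish the four-way equivalence through the pairings $(1)\Leftrightarrow(2)$, $(3)\Leftrightarrow(4)$, and $(1)\Leftrightarrow(3)$, where the first two rest on well-understood minimality principles and the third carries the main technical weight.

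For $(1)\Leftrightarrow(2)$: the forward direction is immediate because the canonical dual $\widetilde{\Gamma}$ has optimal upper frame bound equal to the reciprocal of the optimal lower frame bound of $\Gamma$, namely $(\widetilde{A}_\Gamma)^{-1}$. For the converse I would invoke the classical fact that, among all dual g-frames of $\Gamma$, the canonical dual is the unique one whose optimal upper bound attains the minimum $(\widetilde{A}_\Gamma)^{-1}$. The proof mimics the orthogonal-decomposition argument already exploited in Proposition \ref{minimal}(2): any dual g-frame $\Gamma^d$ satisfies $T^*_{\Gamma^d}x = T^*_{\widetilde{\Gamma}}x + R(x)$ with $R(x)\in\ker(T_\Gamma)$, whence $\|T^*_{\Gamma^d}x\|^2 = \|T^*_{\widetilde{\Gamma}}x\|^2 + \|R(x)\|^2$, and equality throughout forces $R\equiv 0$, i.e.\ $\Gamma^d=\widetilde{\Gamma}$. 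Applying this with $\Gamma^d=\Gamma^\dagger$ closes the loop.

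The equivalence $(3)\Leftrightarrow(4)$ is essentially a restatement of Proposition \ref{minimal}(2). Reading the $Q$-equivalence of $U\Gamma$ and $\Lambda$ as an identity $u_i\Gamma_i=\Lambda_i Q$ for some invertible $Q\in B(\mathcal{H})$, this is exactly the minimality condition in that proposition applied with $T=M_{U,\Lambda,\Gamma}$ and $Q=S_\Lambda^{-1}T$, while the numerical value of $\|T^*_{U\Gamma}\|^2$ asserted in $(4)$ is precisely what was computed in its proof.

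The bridge $(1)\Leftrightarrow(3)$ is the essential step. From $(3)$, the identity $u_i\Gamma_i=\Lambda_i Q$ collapses $M_{U,\Lambda,\Gamma}$ to $S_\Lambda Q$, hence to an invertible operator, and substituting into the explicit formula $\Gamma^\dagger_i=u_i^*\Lambda_i(M^{-1})^*$ extracted from the proof of Theorem \ref{uni-dual}, together with the accompanying identification of $S_\Gamma$, should reduce $\Gamma^\dagger_i$ to $\widetilde{\Gamma}_i$. For the converse, the equality $\Gamma^\dagger=\widetilde{\Gamma}$ unfolds to $u_i^*\Lambda_i=\Gamma_i S_\Gamma^{-1}M^*$, from which, by taking adjoints and using the invertibility of each $u_i$ guaranteed by the semi-normalization of $U$, one extracts an invertible operator $Q\in B(\mathcal{H})$ witnessing the required $Q$-equivalence. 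The main obstacle here will be to arrange the adjoint manipulations so that a single $i$-independent $Q$ emerges from the identity, without residual interaction between $u_i^*u_i$ and $\Gamma_i$; this is exactly the point at which the semi-normalization hypothesis on $U$ becomes indispensable, and it is the step on which I expect most of the real work to be concentrated.
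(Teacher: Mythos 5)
Your architecture coincides with the paper's: (3)$\Leftrightarrow$(4) is delegated to Proposition \ref{minimal}, the link between (1) and (2) goes through the parametrization $\Gamma^{\dagger}_{i}=\widetilde{\Gamma}_{i}+\pi_{i}\Psi$ with $T_{\Gamma}\Psi=0$, and the bridge to (3) uses the explicit formula $\Gamma^{\dagger}_{i}=u_{i}^{*}\Lambda_{i}(M_{U,\Lambda,\Gamma}^{-1})^{*}$ from Theorem \ref{uni-dual}; the paper merely arranges these pieces as the cycle (1)$\Rightarrow$(2)$\Rightarrow$(3)$\Rightarrow$(1) rather than your pairings. However, the step you defer as ``the main obstacle'' in (1)$\Leftrightarrow$(3) is a genuine gap and, as the statement is written, it does not close. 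Writing out $\Gamma^{\dagger}=\widetilde{\Gamma}$ gives $u_{i}^{*}\Lambda_{i}(M^{-1})^{*}=\Gamma_{i}S_{\Gamma}^{-1}$, i.e. $u_{i}^{*}\Lambda_{i}=\Gamma_{i}S_{\Gamma}^{-1}M^{*}$: a single invertible operator relating the sequence $\{u_{i}^{*}\Lambda_{i}\}_{i\in I}$ to $\Gamma$, not $U\Gamma=\{u_{i}\Gamma_{i}\}_{i\in I}$ to $\Lambda$. Passing from one to the other forces you to commute $u_{i}u_{i}^{*}$ past $\Lambda_{i}$, which fails for a general semi-normalized diagonal symbol (it works when each $u_{i}$ is unitary, or when $U$ is a scalar weight). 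The paper gives you no cover here: it declares (3)$\Rightarrow$(1) to be ``nothing more than routine calculations'' and extracts (2)$\Rightarrow$(3) from ``the definition of $\Gamma^{\dagger}$'' without performing exactly this computation. You must either reinterpret the (nowhere defined) notion of $Q$-equivalence as relating $\{u_{i}^{*}\Lambda_{i}\}$ to $\Gamma$, or impose extra structure on $U$; as proposed, the bridge cannot be completed.

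There is a second concrete failure, in your converse (2)$\Rightarrow$(1). With $S_{\Gamma^{\dagger}}=S_{\widetilde{\Gamma}}+\Psi^{*}\Psi$, the optimal upper bound of $\Gamma^{\dagger}$ is $\sup_{\|x\|=1}\big(\langle S_{\widetilde{\Gamma}}x,x\rangle+\|\Psi x\|^{2}\big)$, and equating this supremum to $\|S_{\widetilde{\Gamma}}\|=(\widetilde{A}_{\Gamma})^{-1}$ does not force $\Psi=0$: two positive operators can satisfy $\|A+B\|=\|A\|$ with $B\neq0$ whenever $B$ is supported where $A$ is small (take $S_{\widetilde{\Gamma}}$ and $\Psi^{*}\Psi$ simultaneously diagonalizable with disjointly located peaks; any g-frame with nonzero excess admits such a $\Psi$). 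Hence the ``classical fact'' you invoke --- that the canonical dual is the \emph{unique} dual whose optimal upper bound attains the minimum $(\widetilde{A}_{\Gamma})^{-1}$ --- is false; only the operator inequality $S_{\widetilde{\Gamma}}\leq S_{\Gamma^{d}}$, hence minimality of the bound, is true. Your pointwise Pythagorean identity $\|T^{*}_{\Gamma^{d}}x\|^{2}=\|T^{*}_{\widetilde{\Gamma}}x\|^{2}+\|\Psi x\|^{2}$ does yield uniqueness for the \emph{pointwise} minimal-norm property, but condition (2) constrains only the supremum. The paper makes the identical unjustified jump in its Eq. (2.7), so this is a defect you would inherit from the source rather than repair.
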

\begin{proof}
The implication (3)$\Leftrightarrow$(4) is proved in Proposition \ref{minimal}. Let us first prove that (1)$\Rightarrow$(2). To this end, suppose that $\Gamma^{\dagger}$ is the unique dual g-frames of $\Gamma$ for which
\begin{eqnarray*}
M^{-1}_{U,\Lambda,\Gamma}=M_{U^{-1},\Gamma^{\dagger},{\Lambda}^d},
\end{eqnarray*}
for all dual g-frame $\Lambda^{d}$ of $\Lambda$.
In light of \cite[Theorem 3.4]{aref} we have
\begin{eqnarray}\label{2.6}
\Gamma^{\dagger}_i=\widetilde{\Gamma}_i+\pi_{i}\Psi
\end{eqnarray}
for some $\Psi\in B({\mathcal H},\ell^2(\oplus_i{\mathcal K}_i))$ such that $T_\Gamma\Psi=0$, where, here and in the sequel, $\pi_i$ is the standard projection on the $i$-th component. Observe that
$T_{\Gamma^{\dagger}}=T_{\widetilde{\Gamma}}+\Psi^{*}$
and we get
$S_{\Gamma^{\dagger}}=S_{\widetilde{\Gamma}}+\Psi^{*}\Psi$. We now invoke the positivity of operators $S_{\Gamma^{\dagger}}$, $S_{\widetilde{\Gamma}}$ and $\Psi^{*}\Psi$ as well as the equality $S_{\widetilde{\Gamma}}=S_{\Gamma}^{-1}$ to conclude that
\begin{align}\label{2.7}
\|S_{\Gamma^{\dagger}}\|_{\rm op}&=\sup_{\|x\|=1}\langle\Gamma^{\dagger}x,x\rangle\nonumber\\
&=\|S_{\widetilde{\Gamma}}\|_{\rm op}+\sup_{\|x\|=1}\|\Psi x\|^{2}\nonumber\\
&=\|S_{\Gamma}^{-1}\|_{\rm op}+\sup_{\|x\|=1}\|\Psi x\|^{2}.
\end{align}
From this, by equality $\Gamma^\dagger=\widetilde{\Gamma}$, we deduce that $\Psi=0$. On the other hand, an argument similar to the proof of Proposition 5.4.4 of \cite{c} with the aid of \cite[Lemma 2.1]{Sun2} shows that $\|S_\Gamma^{-1}\|_{\rm op}=\Big(\widetilde{A}_{\Gamma}\Big)^{-1}$ and particularly $\|S_{\Gamma^\dagger}\|_{\rm op}=\widetilde{B}_{\Gamma^\dagger}$. It follows that $\widetilde{B}_{\Gamma^\dagger}=\Big(\widetilde{A}_{\Gamma}\Big)^{-1}$.

In order to prove that (2)$\Rightarrow$(3), first note that
Eq. (\ref{2.7}) together with the equality of the optimal upper frame bound of the g-frame $\Gamma^{\dagger}$ with $\Big(\widetilde{A}_{\Gamma}\Big)^{-1}$ imply that
$\Psi=0$. Hence, the equivalency of the g-frames $U\Gamma$ and $\Lambda$ follows from
Eq. (\ref{2.6}) and the definition of $\Gamma^\dagger$.

Finally, the proof will be completed by showing that (3)$\Rightarrow$(1). To do that, just noting that this is nothing more than routine calculations.
\end{proof}


\section{Some approaches
for constructing invertible generalized multipliers}

In this section, we present some
approaches for constructing of invertible generalized multipliers from a given one. In this respect, we
first recall the following perturbation condition from \cite{Naj}.


\begin{definition}
Let $ \Lambda =\lbrace\Lambda_{i}\rbrace_{i}$  be a sequence in  $ B(\mathcal{H}  , \mathcal{H}_{i})$ we say that a sequence $ \Lambda' =\lbrace \Lambda'_{i}\rbrace_{i}$
 in $ B(\mathcal{H} ,\mathcal{H}_{i})$ is a $ \mu $-perturbation of $ \Lambda $ if $ \parallel T_{\Lambda}-T_{\Lambda'}\parallel \leq \mu$.
 \end{definition}


\begin{theorem}\label{2.14}
Suppose that $\Lambda$ and $\Gamma$ are g-frames for $\mathcal{H}$ and suppose that $U$ is semi-normalized. For any  $\Lambda'$ which is a $\mu$-perturbation of $\Lambda$ with $\mu<\sqrt{A_{\Lambda}}$, there exists a Bessel sequence $\Gamma'$ which is a $\lambda\mu$-perturbation of $\Gamma$ for some $\lambda>0$ and $M_{U,\Lambda',\Gamma'}=M_{U,\Lambda,\Gamma}$.
\end{theorem}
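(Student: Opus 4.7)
The plan is to solve the operator equation $T_{\Lambda'}UT_{\Gamma'}^{*}=T_{\Lambda}UT_{\Gamma}^{*}$ explicitly with the Moore--Penrose pseudo-inverse of $T_{\Lambda'}$, then read off $\Gamma'$ from the resulting analysis operator and bound its distance to $T_{\Gamma}$.

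First I would verify that $\Lambda'$ is still a g-frame. The bound $\|T_{\Lambda}-T_{\Lambda'}\|\le\mu<\sqrt{A_{\Lambda}}$ combined with the reverse triangle inequality yields $\|T_{\Lambda'}^{*}x\|\ge (\sqrt{A_{\Lambda}}-\mu)\|x\|$ for every $x\in\mathcal{H}$. Hence $T_{\Lambda'}$ is surjective, the frame operator $S_{\Lambda'}$ is boundedly invertible, and the pseudo-inverse $T_{\Lambda'}^{\dagger}=T_{\Lambda'}^{*}S_{\Lambda'}^{-1}$ is a bounded right inverse of $T_{\Lambda'}$ satisfying $\|T_{\Lambda'}^{\dagger}\|\le (\sqrt{A_{\Lambda}}-\mu)^{-1}$.

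Next I would \emph{define} $\Gamma'$ by prescribing its analysis operator
\[
T_{\Gamma'}^{*}\;:=\;T_{\Gamma}^{*}\;+\;U^{-1}\,T_{\Lambda'}^{\dagger}(T_{\Lambda}-T_{\Lambda'})\,U\,T_{\Gamma}^{*},
\]
where $U^{-1}=D_{U^{-1}}$ is bounded thanks to the semi-normalization hypothesis on $U$. The right-hand side is a bounded operator $\mathcal{H}\to\ell^{2}(\oplus_{i}\mathcal{K}_{i})$, so setting $\Gamma'_{i}:=\pi_{i}T_{\Gamma'}^{*}$ produces a g-Bessel sequence whose analysis operator is $T_{\Gamma'}^{*}$. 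Applying $T_{\Lambda'}U$ to this definition and using $T_{\Lambda'}T_{\Lambda'}^{\dagger}=\mathrm{Id}_{\mathcal{H}}$ together with $UU^{-1}=\mathrm{Id}$ on $\ell^{2}(\oplus_{i}\mathcal{K}_{i})$ gives
\[
T_{\Lambda'}UT_{\Gamma'}^{*}=T_{\Lambda'}UT_{\Gamma}^{*}+(T_{\Lambda}-T_{\Lambda'})UT_{\Gamma}^{*}=T_{\Lambda}UT_{\Gamma}^{*},
\]
which is the desired identity $M_{U,\Lambda',\Gamma'}=M_{U,\Lambda,\Gamma}$.

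Finally, taking operator norms on the defining equation and using $\|T_{\Gamma}^{*}\|\le\sqrt{B_{\Gamma}}$ yields
\[
\|T_{\Gamma}-T_{\Gamma'}\|=\|T_{\Gamma}^{*}-T_{\Gamma'}^{*}\|\le\|U^{-1}\|\,\|T_{\Lambda'}^{\dagger}\|\,\|T_{\Lambda}-T_{\Lambda'}\|\,\|U\|\,\sqrt{B_{\Gamma}}\le\lambda\mu,
\]
with $\lambda:=\|U^{-1}\|\,\|U\|\,\sqrt{B_{\Gamma}}/(\sqrt{A_{\Lambda}}-\mu)$, giving precisely the required $\lambda\mu$-perturbation. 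The only real subtlety is that the whole construction hinges on being able to cancel $U$ on both sides of the multiplier equation; this is exactly what semi-normalization delivers, and it is why the strict bound $\mu<\sqrt{A_{\Lambda}}$ is imposed (it keeps $\sqrt{A_{\Lambda}}-\mu$ strictly positive, preventing $\lambda$ from blowing up).
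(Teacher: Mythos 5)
Your proof is correct and follows essentially the same strategy as the paper: both construct $\Gamma'$ so that $T_{\Gamma'}^{*}-T_{\Gamma}^{*}$ equals a correction term of the form $U^{-1}\,(\text{bounded right inverse of }T_{\Lambda'})\,(T_{\Lambda}-T_{\Lambda'})\,U\,T_{\Gamma}^{*}$, which simultaneously forces $M_{U,\Lambda',\Gamma'}=M_{U,\Lambda,\Gamma}$ and yields the $\lambda\mu$ bound with $\lambda$ of order $\|U^{-1}\|\|U\|\sqrt{B_{\Gamma}}/(\sqrt{A_{\Lambda}}-\mu)$. Your additive formulation via the pseudo-inverse $T_{\Lambda'}^{\dagger}=T_{\Lambda'}^{*}S_{\Lambda'}^{-1}$ is just a cleaner packaging of the paper's definition of $\Gamma'$ through the canonical dual of $U\Lambda$ plus a kernel projection, which the paper then shows reduces to the same difference formula.
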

\begin{proof}
First note that Theorem 3.5 of \cite{Naj} implies that $\Lambda'$ is a g-frame for $\mathcal{H}$ with lower bound $A_{\Lambda'}=(\sqrt{A_{\Lambda}}-\mu)^{2}$. Moreover, it is not hard to check that the sequence $U\Lambda$ is a g-frame for $\mathcal{H}$. Hence, we have
\begin{eqnarray*}
\ell^{2}(\oplus_{i}\mathcal{K}_{i})=\mathcal{R}(T^{*}_{U\Lambda})\oplus\ker (T_{U\Lambda})\hspace{1cm}and \hspace{1cm} T^{*}_{U\Lambda}T_{\widetilde{U\Lambda}}=T^{*}_{U\Lambda}S_{U\Lambda}^{-1}
T_{U\Lambda}=P_{\mathcal{R}(T^{*}_{U\Lambda})}.
\end{eqnarray*}
If we define
\begin{eqnarray*}
\Gamma':=\{u_j^{-1}\Lambda'_{i}S^{-1}_{U\Lambda}M_{U,\Lambda,\Gamma}+\pi_{i}P_{\ker (T_{U\Lambda})}T^{*}_{\Gamma}\}_{i\in I},
\end{eqnarray*}
then we have
\begin{align*}
T_{\Gamma'}=T_{\Gamma}U^{*}T^{*}_{\Lambda}T_{\widetilde{\Lambda'}}(U^{*})^{-1}
+T_{\Gamma}P_{\ker(T_{U\Lambda})}.
\end{align*}
Hence, we observe that
\begin{align}\label{9}
T_{\Gamma'}-T_{\Gamma}&=T_{\Gamma}U^{*}T^{*}_{\Lambda}T_{\widetilde{\Lambda'}}
(U^{*})^{-1}+T_{\Gamma}P_{\ker (T_{U\Lambda})} \nonumber \\
&-T_{\Gamma}(P_{\mathcal{R}(T^{*}_{U\Lambda})}+P_{\ker (T_{U\Lambda})})\nonumber\\
&=T_{\Gamma}U^{*}T^{*}_{\Lambda}T_{\widetilde{\Lambda'}}(U^{*})^{-1}-T_{\Gamma}
P_{\mathcal{R}(T^{*}_{U\Lambda})}\nonumber\\
&=T_{\Gamma}U^{*}(T^{*}_{\Lambda}-T^{*}_{\Lambda'})T_{\widetilde{\Lambda'}}(U^{*})^{-1}.
\end{align}
On the other hand, by the Open Mapping Theorem, one can conclude that there are constants
$a,b>0$ such that $a<\|U\|<b $ and thus
\begin{align*}
\|T_{\Gamma'}-T_{\Gamma}\|_{\rm op}&\leq \|T_{\Gamma}\|_{\rm op}\|U^{*}\|_{\rm op} \|(T^{*}_{\Lambda}-T^{*}_{\Lambda'})\|_{\rm op} \|T_{\widetilde{\Lambda'}}\|_{\rm op} \|(U^{*})^{-1}\|_{\rm op}\\
&\leq\dfrac{b\sqrt{B_{\Gamma}}\mu }{a(\sqrt{A_{\Lambda}}-\mu)}.
\end{align*}
If we set $ \lambda=\dfrac{b\sqrt{B_{\Gamma}}}{a(\sqrt{A_{\Lambda}}-\mu)} $, then $ \Gamma' $ is a $ \lambda\mu $-perturbation of $\Gamma$  and particularly
\begin{align*}
M_{U,\Lambda',\Gamma'}&=T_{\Lambda'}UT^{*}_{\Gamma'}\\
&=T_{\Lambda'}U(U^{-1}T^{*}_{\widetilde{\Lambda'}}T_{\Lambda}UT^{*}_{\Gamma})+
T_{\Lambda'}U(P_{\ker T_{\Lambda'}U}T_{\Gamma})\\
&=T_{\Lambda}UT^{*}_{\Gamma}\\&=M_{U,\Lambda,\Gamma}.
\end{align*}
This completes the proof.
\end{proof}


The next result shows that in Theorem \ref{2.14} above
our choice of $\Gamma'$ turns out to be perfect in terms of best approximations with respect to the norm $\|{\mathcal X}\|_{\mathcal{F}}:=\|T_{\mathcal X}\|_{\rm op} $, where ${\mathcal X}=\{{\mathcal X}_{i}\}_{i\in I}$ is in $g{\mathcal F}({\mathcal H})$,
the set of all g-frames in  $\mathcal{H}$, whenever $ M_{U,\Lambda,\Gamma} $ is invertibe g-frame multiplier.

\begin{theorem}
Let $\Lambda$ and $\Gamma$ be g-frames for $\mathcal{H}$ and let $U$ be semi-normalized symbol such that $M_{U,\Lambda,\Gamma}$ is invertible. If $\Lambda'$ is a $\mu$-perturbation of $\Lambda$ with $\mu<\sqrt{A_{\Lambda}}$, Then
\begin{eqnarray*}
\Gamma':=\{u_i^{-1}\Lambda'_{i}S^{-1}_{U\Lambda}M_{U,\Lambda,\Gamma}+\pi_{i}P_{\ker (T_{U\Lambda})}T^{*}_{\Gamma}\}_{i\in I}
\end{eqnarray*}
is a best approximation of $\Gamma$ in $g{\mathcal F}({\mathcal H})$ and a $\lambda\mu$-perturbation of $\Gamma$ such that $M_{U,\Lambda',\Gamma'}=M_{U,\Lambda,\Gamma} $, where $\lambda=\frac{b\sqrt{B_{\Gamma}}}{a\sqrt{A_{\Lambda}}-\mu}$.
\end{theorem}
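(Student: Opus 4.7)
The first two assertions, that $\Gamma'$ is a $\lambda\mu$-perturbation of $\Gamma$ and satisfies $M_{U,\Lambda',\Gamma'}=M_{U,\Lambda,\Gamma}$, follow directly from Theorem \ref{2.14}, whose construction produces precisely this $\Gamma'$ along with the stated estimates. Hence the new content is the best-approximation claim, which, in the natural reading suggested by the paragraph preceding the theorem, amounts to showing that for every $\Gamma''\in g\mathcal{F}(\mathcal{H})$ with $M_{U,\Lambda',\Gamma''}=M_{U,\Lambda,\Gamma}$ one has $\|T_{\Gamma'}-T_{\Gamma}\|_{\rm op}\leq\|T_{\Gamma''}-T_{\Gamma}\|_{\rm op}$.

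My plan is to parametrise the admissible set and exhibit $\Gamma'$ as its minimiser through an orthogonal-projection argument. Fix admissible $\Gamma''$ and set $R:=T_{\Gamma''}^{*}-T_{\Gamma'}^{*}$. The two multiplier identities yield $T_{\Lambda'}UR=0$, so the range of $R$ lies in $\ker(T_{\Lambda'}U)$, and the admissible synthesis-adjoints form the affine flat $T_{\Gamma'}^{*}+\{R:\mathrm{ran}(R)\subseteq\ker(T_{\Lambda'}U)\}$. The key claim I intend to prove is that $\mathrm{ran}(T_{\Gamma'}^{*}-T_{\Gamma}^{*})\subseteq\ker(T_{\Lambda'}U)^{\perp}$. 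Granting it, the projection $P:=P_{\ker(T_{\Lambda'}U)^{\perp}}$ annihilates $R$ and fixes $T_{\Gamma'}^{*}-T_{\Gamma}^{*}$, so
\begin{equation*}
\|T_{\Gamma''}^{*}-T_{\Gamma}^{*}\|_{\rm op}\geq\|P(T_{\Gamma''}^{*}-T_{\Gamma}^{*})\|_{\rm op}=\|T_{\Gamma'}^{*}-T_{\Gamma}^{*}\|_{\rm op},
\end{equation*}
and taking adjoints delivers the best-approximation inequality.

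The heart of the argument, and the main obstacle I foresee, is verifying this range condition. Expanding the explicit formula for $\Gamma'$ gives
\begin{equation*}
T_{\Gamma'}^{*}-T_{\Gamma}^{*}=D_{U^{-1}}T_{\Lambda'}^{*}S_{U\Lambda}^{-1}M_{U,\Lambda,\Gamma}-P_{\mathrm{ran}(T_{U\Lambda}^{*})}T_{\Gamma}^{*},
\end{equation*}
so I would exploit the orthogonal decomposition $\ell^{2}(\oplus_{i}\mathcal{K}_{i})=\mathrm{ran}(T_{U\Lambda}^{*})\oplus\ker(T_{U\Lambda})$ used in Theorem \ref{2.14} to show each summand lies in $\ker(T_{\Lambda'}U)^{\perp}$. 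The delicate point is that this decomposition is governed by $U\Lambda$ whereas the constraint subspace is governed by $\Lambda'$; I expect to invoke the hypothesis $\mu<\sqrt{A_{\Lambda}}$ (which keeps $\Lambda'$ a g-frame close to $\Lambda$ in synthesis-operator norm) together with the semi-normalised nature of $U$ (which makes $D_{U}$ and $D_{U^{-1}}$ bounded isomorphisms) to identify or suitably align the two kernels $\ker(T_{U\Lambda})$ and $\ker(T_{\Lambda'}U)$, thereby closing the argument.
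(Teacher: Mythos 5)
Your reduction of the problem is sound and is essentially the adjoint of what the paper does: the first two assertions are indeed quoted from Theorem \ref{2.14}, the admissible analysis operators form the affine flat $T^{*}_{\Gamma'}+\{R:\mathrm{ran}(R)\subseteq\ker(T_{\Lambda'}U)\}$, and minimality would follow from the orthogonality claim $\mathrm{ran}(T^{*}_{\Gamma'}-T^{*}_{\Gamma})\subseteq\ker(T_{\Lambda'}U)^{\perp}$. The gap is that this claim --- which you yourself flag as the main obstacle --- is never proved, and the route you propose for it cannot work. You hope to ``identify or suitably align'' $\ker(T_{U\Lambda})$ with $\ker(T_{\Lambda'}U)=\ker(T_{U\Lambda'})$ using $\mu<\sqrt{A_{\Lambda}}$ and the semi-normalization of $U$; but those hypotheses only give that the two kernels have the same \emph{dimension} (in the spirit of Proposition \ref{exce}), not that they are the same subspace, and for $\Lambda'\neq\Lambda$ they differ in general. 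Likewise, your two summands $D_{U^{-1}}T^{*}_{\Lambda'}S^{-1}_{U\Lambda}M_{U,\Lambda,\Gamma}$ and $P_{\mathcal{R}(T^{*}_{U\Lambda})}T^{*}_{\Gamma}$ have ranges inside $U^{-1}\mathcal{R}(T^{*}_{\Lambda'})$ and $U^{*}\mathcal{R}(T^{*}_{\Lambda})$ respectively, and neither of these is contained in $\ker(T_{\Lambda'}U)^{\perp}=\overline{\mathcal{R}(U^{*}T^{*}_{\Lambda'})}$ for a general non-unitary diagonal symbol; so, with the displayed formula for $\Gamma'$ read literally, the orthogonality you need can fail.

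The paper closes exactly this step by an explicit computation rather than a kernel comparison. Since $M_{U,\Lambda',\Gamma''}=M_{U,\Lambda,\Gamma}$, the sequence $\{\Gamma''_{i}M_{U,\Lambda,\Gamma}^{-1}\}_{i}$ is a dual g-frame of $U\Lambda'=\{u_{i}^{*}\Lambda'_{i}\}_{i}$, hence by \cite[Theorem 3.4]{aref} it is the canonical dual of $U\Lambda'$ plus $\pi_{i}\Psi$ with $T_{U\Lambda'}\Psi=0$. Substituting this parametrization, the paper writes $T_{\Gamma''}-T_{\Gamma}$ as a fixed operator (namely $T_{\Gamma}U^{*}(T^{*}_{\Lambda}-T^{*}_{\Lambda'})T_{\widetilde{U\Lambda'}}$) composed with $P_{\mathcal{R}(T^{*}_{U\Lambda'})}$, plus a $\Gamma''$-dependent operator composed with $P_{\ker(T_{U\Lambda'})}$; for $\Gamma'$ the second piece vanishes, and restricting inputs to $\mathcal{R}(T^{*}_{U\Lambda'})$ yields $\|T_{\Gamma''}-T_{\Gamma}\|_{\rm op}\geq\|T_{\Gamma'}-T_{\Gamma}\|_{\rm op}$. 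Note that for this to go through, the formula for $\Gamma'$ must be read with $u_{i}^{*}$, $S^{-1}_{U\Lambda'}$ and $P_{\ker(T_{U\Lambda'})}$ (the paper is not consistent about the prime); with that reading your key claim does hold, both summands landing in $\mathcal{R}(T^{*}_{U\Lambda'})=\ker(T_{\Lambda'}U)^{\perp}$, and your projection argument then finishes the proof. As it stands, however, the central orthogonality in your proposal is asserted, not established.
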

\begin{proof}
According to the theorem \ref{2.14} and its  proof, it suffices to show that $\Gamma'$ is the best approximation of $\Gamma$. To this end, suppose that $\Gamma^{''}$ is another g-frame for $\mathcal{H}$ such that $M_{U,\Lambda',\Gamma^{''}}= M_{U,\Lambda,\Gamma}$. For each $x\in \mathcal{H}$, we have
\begin{align*}
x=M_{U,\Lambda',\Gamma^{''}}( M_{U,\Lambda,\Gamma} ^{-1}x)
=T_{U\Lambda}T^{*}_{\Gamma^{''}M_{U,\Lambda,\Gamma} ^{-1}}x.
\end{align*}
Hence, $\Gamma^{''}M_{U,\Lambda,\Gamma} ^{-1}$ is a dual g-frame for $U\Lambda$ and thus there exists a bounded operator $\Psi:\mathcal{H}\longrightarrow \ell^2(\oplus_{i}\mathcal{K}_{i})$ such that
\begin{eqnarray*}
\Gamma^{''}_iM_{U,\Lambda,\Gamma} ^{-1}=u_i^*\Lambda_{i}S^{-1}_{U\Lambda}+\pi_{i}\Psi
\end{eqnarray*}
and $T_{U\Lambda}\Psi=0$. Hence, the equality $\ell^{2}(\oplus_{i}\mathcal{K}_{i})=\mathcal{R}(T^{*}_{U\Lambda})\oplus \ker (T_{U\Lambda})$, implies that
\begin{eqnarray*}
\Psi^{*}\{x_{i}\}_{i\in I}=\Psi^{*}(P_{\mathcal{R}(T^{*}_{U\Lambda})}\{ x_{i}\}_{i\in I}+P_{\ker(T_{U\Lambda})}\{x_{i}\}_{i\in I})=\Psi^{*}P_{\ker (T_{U\Lambda})}\{ x_{i}\}_{i\in I},
\end{eqnarray*}
for all $\{x_{i}\}_{i\in I}\in \ell^{2}(\oplus_{i}\mathcal{K}_{i})$. Moreover, we see that
\begin{align}\label{1.12}
T_{\Gamma}U^{*}(T^{*}_{\Lambda}-T^{*}_{\Lambda'})T_{\widetilde{U\Lambda}}&=
T_{\Gamma}U^{*}(T^{*}_{\Lambda}-T^{*}_{\Lambda'})S^{-1}T_{U\Lambda} \nonumber\\
&=T_{\Gamma}U^{*}(T^{*}_{\Lambda}-T^{*}_{\Lambda'})S^{-1}T_{U\Lambda}( P_{\mathcal{R}(T^{*}_{U\Lambda})}+P_{\ker (T_{\theta})})\nonumber\\
&=T_{\Gamma}U^{*}(T^{*}_{\Lambda}-T^{*}_{\Lambda'})S^{-1}T_{U\Lambda}
P_{\mathcal{R}(T^{*}_{U\Lambda})}\nonumber\\
&=T_{\Gamma}U^{*}(T^{*}_{\Lambda}-T^{*}_{\Lambda'})T_{\widetilde{U\Lambda}}
P_{\mathcal{R}(T^{*}_{U\Lambda})}.
\end{align}
Now by equality
\begin{eqnarray*}
\Gamma^{''}_{i}=(u_i^*\Lambda_{i}S^{-1}_{U\Lambda}+\pi_{i}\Psi)M_{U,\Lambda,\Gamma} \hspace{1cm}(i\in I)
\end{eqnarray*}
we have
\begin{align*}
T_{\Gamma^{''}}\{x_{i}\}&=\sum_{i\in I}(\Gamma^{''}_{i})^{*}(x_{i})\nonumber\\
&=\sum_{i\in I}M^{*}_{U,\Lambda,\Gamma}(S^{-1}_{U\Lambda}\Lambda^{*}_{i}u_i+\Psi^{*}\pi_{i}^{*})\{ x_{i}\}\nonumber\\
&=M_{U^{*},\Gamma,\Lambda }(S^{-1}_{U\Lambda}T_{U\Lambda}\{x_{i}\}+\Psi^{*}\{ x_{i}\})\nonumber\\
&=M_{U^{*},\Gamma,\Lambda }(T_{\widetilde{U\Lambda}}\{x_{i}\}+\Psi^{*}\{x_{i}\})
\end{align*}
for each $\{x_{i}\}_{i\in I}\in \ell^{2}(\oplus_{i}\mathcal{K}_{i})$. Hence, the equation $ T^{*}_{U\Lambda}T_{\widetilde{U\Lambda}}=P_{\mathcal{R}(T^{*}_{\theta})}$ conclude that
\begin{align*}
T_{\Gamma^{''}}-T_{\Gamma}&=M_{U^{*},\Gamma,\Lambda }(T_{\widetilde{U\Lambda}}+\Psi^{*})-T_{\Gamma}(P_{\mathcal{R}
(T^{*}_{\widetilde{U\Lambda}})}+P_{\ker (T_{\widetilde{U\Lambda}})})\\
&=M_{U^{*},\Gamma,\Lambda }T_{\widetilde{U\Lambda}}-T_{\Gamma}
P_{\mathcal{R}(T^{*}_{\widetilde{U\Lambda}})}+M_{U^{*},\Gamma,\Lambda }\Psi^{*}-T_{\Gamma}P_{\ker (T_{\widetilde{U\Lambda}})}\\
&=T_{\Gamma}U^{*}T^{*}_{\Lambda}T_{\widetilde{U\Lambda}}-
T_{\Gamma}T^{*}_{U\Lambda}T_{\widetilde{U\Lambda}}+M_{U^{*},\Gamma,\Lambda }\Psi^{*}-T_{\Gamma}P_{\ker (T_{\widetilde{U\Lambda}})}\\
&=T_{\Gamma}U^{*}T^{*}_{\Lambda}T_{\widetilde{U\Lambda}}-
T_{\Gamma}U^{*}T^{*}_{\Lambda'}T_{\widetilde{U\Lambda}}+M_{U^{*},\Gamma,\Lambda }\Psi^{*}-T_{\Gamma}P_{\ker (T_{\widetilde{U\Lambda}})}\\
&=T_{\Gamma}U^{*}(T^{*}_{\Lambda}-T^{*}_{\Lambda'})T_{\widetilde{U\Lambda}}
+M_{U^{*},\Gamma,\Lambda }\Psi^{*}P_{\ker (T_{\widetilde{U\Lambda}})}-T_{\Gamma}P_{\ker (T_{\widetilde{U\Lambda}})}\\
&=T_{\Gamma}U^{*}(T^{*}_{\Lambda}-T^{*}_{\Lambda'})
T_{\widetilde{U\Lambda}}P_{\mathcal{R}(T^{*}_{\widetilde{U\Lambda}})}+
(M_{U^{*},\Gamma,\Lambda }V^{*}-T_{\Gamma})P_{\ker (T_{\widetilde{U\Lambda}})}.
\end{align*}
We now invoke the equality $\mathcal{R}(T^{*}_{\widetilde{U\Lambda}})=\ker (T_{\widetilde{U\Lambda}})^{\perp}$ to deduce that
\begin{eqnarray*}
\|(T_{\Gamma^{''}}-T_{\Gamma})\{x_{i}\}\|^{2}\geq \| T_{\Gamma}U^{*}(T^{*}_{\Lambda}-T^{*}_{\Lambda'})
T_{\widetilde{U\Lambda}}P_{\mathcal{R}(T^{*}_{\widetilde{U\Lambda}})}\{ x_{i}\}\|^{2}
\end{eqnarray*}
for all $\{x_{i}\}_{i}\in \ell^{2}(\oplus_{i}\mathcal{K}_{i})$. By using equations \eqref{9} and \eqref{1.12}, we have
\begin{eqnarray*}
\|(T_{\Gamma^{''}}-T_{\Gamma})\|_{\rm op}^{2}\geq  \| (T_{\Gamma}-T_{\Gamma'})\|^{2}_{\rm op}.
\end{eqnarray*}
Therefore $\|(T_{\Gamma^{''}}-T_{\Gamma})\|_{\mathcal{F}}\geq  \| (T_{\Gamma}-T_{\Gamma'})\|_{\mathcal{F}} $ and the proof is completed.
\end{proof}


We conclude this paper with the following necessary condition for invertibility of generalized g-Bessel multiplier which is quite different
from those studied in the previous literatures on this topic.

\begin{theorem}
Let $\Lambda$ be a g-frame for $\mathcal{H}$ and $\Lambda^{d}$ be the dual g-frame of $\Lambda$. Assume that $\Gamma$ is a sequence of bounded operators from $ \mathcal{H}$ into ${\mathcal K}_i$ ($i\in I$) which satisfies
the following  two conditions:
\begin{enumerate}
\item $\lambda:=\sum_{i\in I}\parallel \Lambda_{i}-\Gamma_{i} \parallel^{2}<\infty$;
\item $\mu:=\sum_{i\in I}\parallel \Lambda_{i}-u_i^{*}\Gamma_{i} \parallel \parallel \Lambda_{i}^{d}\parallel<1$.
\end{enumerate}
If $M$ denote the operator $ M_{U,\Gamma,\Lambda^{d}}$, then $\Gamma$ is a g-frame for $\mathcal{H}$ with bounds $\frac{1}{\|U\|^{2}B_{\Lambda^d}}(1-\mu)^{2}$ and $B_\Lambda(1+\frac{\sqrt{\lambda}}{\sqrt{B_\Lambda}})^{2} $ and $M$ is invertible on $\mathcal{H}$ and for all $x\in \mathcal{H}$
\begin{eqnarray*}
\frac{1}{1+\mu}\parallel x\parallel\leq\parallel M^{-1} x\parallel\leq\frac{1}{1-\mu}\parallel x\parallel.
\end{eqnarray*}
\end{theorem}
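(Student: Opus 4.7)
The plan splits naturally into four steps, each handling one of the assertions in the statement.

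First, I would establish the upper g-frame bound for $\Gamma$ by a direct application of the triangle inequality in $\ell^{2}$. For every $x\in\mathcal{H}$, condition (1) together with the Bessel bound for $\Lambda$ gives
$$\left(\sum_{i\in I}\|\Gamma_{i}x\|^{2}\right)^{1/2}\leq \left(\sum_{i\in I}\|\Lambda_{i}x\|^{2}\right)^{1/2}+\left(\sum_{i\in I}\|(\Lambda_{i}-\Gamma_{i})x\|^{2}\right)^{1/2}\leq \left(\sqrt{B_{\Lambda}}+\sqrt{\lambda}\right)\|x\|,$$
which upon squaring produces the stated upper bound $B_{\Lambda}(1+\sqrt{\lambda/B_{\Lambda}})^{2}$. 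In particular, $\Gamma$ is a g-Bessel sequence, so $M=T_{\Gamma}UT^{*}_{\Lambda^{d}}$ is a well-defined bounded operator on $\mathcal{H}$.

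Next I would show that $\|Id_{\mathcal{H}}-M\|\leq\mu$. Using the dual reconstruction formula $x=\sum_{i\in I}\Lambda_{i}^{*}\Lambda_{i}^{d}x$ and the algebraic identity $\Lambda_{i}^{*}-\Gamma_{i}^{*}u_{i}=(\Lambda_{i}-u_{i}^{*}\Gamma_{i})^{*}$, I can write
$$x-Mx=\sum_{i\in I}(\Lambda_{i}-u_{i}^{*}\Gamma_{i})^{*}\Lambda_{i}^{d}x,$$
so estimating summand by summand and invoking condition (2) yields $\|x-Mx\|\leq\mu\|x\|$. Since $\mu<1$, the Neumann series shows that $M$ is invertible. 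The reverse triangle inequality then gives the two-sided bound $(1-\mu)\|x\|\leq\|Mx\|\leq(1+\mu)\|x\|$, and substituting $x=M^{-1}y$ directly produces the asserted inequality
$$\frac{1}{1+\mu}\|y\|\leq\|M^{-1}y\|\leq\frac{1}{1-\mu}\|y\|.$$

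The remaining task is the lower g-frame bound of $\Gamma$, and here I would pass to the adjoint. Since $\|Id_{\mathcal{H}}-M^{*}\|=\|Id_{\mathcal{H}}-M\|\leq\mu$, the same reverse-triangle argument gives $\|M^{*}x\|\geq(1-\mu)\|x\|$ for all $x\in\mathcal{H}$. On the other hand, factoring $M^{*}=T_{\Lambda^{d}}U^{*}T^{*}_{\Gamma}$ and using $\|T_{\Lambda^{d}}\|\leq\sqrt{B_{\Lambda^{d}}}$ yields $\|M^{*}x\|\leq\sqrt{B_{\Lambda^{d}}}\,\|U\|\,\|T^{*}_{\Gamma}x\|$. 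Combining the two estimates and squaring delivers
$$\sum_{i\in I}\|\Gamma_{i}x\|^{2}=\|T^{*}_{\Gamma}x\|^{2}\geq\frac{(1-\mu)^{2}}{\|U\|^{2}B_{\Lambda^{d}}}\|x\|^{2},$$
which is exactly the stated lower bound.

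The main obstacle, and the only step that is not completely routine, is the transition in the last paragraph from invertibility of $M$ to a quantitative lower g-frame bound for $\Gamma$: one must recognize that applying the reverse triangle inequality to $M^{*}$ and then using the (upper) Bessel bound of the auxiliary sequence $\Lambda^{d}$ is what frees the sum $\sum_{i}\|\Gamma_{i}x\|^{2}$ from below. A minor subtlety that also deserves attention is the order of steps: the bound $\|Id-M\|\leq\mu$ should only be extracted after Step 1 has guaranteed that $\Gamma$ is g-Bessel, so that the series defining $Mx$ and the manipulations of $T_{\Gamma}^{*}$ in the adjoint step are unconditionally legitimate.
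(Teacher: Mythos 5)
Your proposal is correct and follows essentially the same route as the paper: the triangle inequality for the upper Bessel bound, the estimate $\|Id_{\mathcal H}-M\|\leq\mu$ obtained from the reconstruction formula $x=\sum_{i}\Lambda_i^*\Lambda_i^d x$ for invertibility and the Neumann bounds on $M^{-1}$, and the factorization of $M$ through $T_\Gamma^*$ for the lower frame bound. The only cosmetic difference is in the last step, where you derive the lower bound from $\|M^*x\|\geq(1-\mu)\|x\|$ together with $M^*=T_{\Lambda^d}U^*T_\Gamma^*$, while the paper expands $\|x\|^2=\langle MM^{-1}x,x\rangle$ and applies Cauchy--Schwarz termwise; the two estimates are identical in content and give the same constant $\frac{(1-\mu)^2}{\|U\|^2 B_{\Lambda^d}}$.
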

\begin{proof}
We first show that $ \Gamma $ is a g-Bessel sequence.
To do that, we define
 \begin{eqnarray*}
T_{\Gamma}:\ell^2(\oplus_{i}\mathcal{K}_{i})\longrightarrow {\mathcal H},\hspace{.5cm} T_{\Gamma}(\lbrace x_{i}\rbrace_{i\in I})=\sum_{i\in I}\Gamma^{*}_{i}(x_{i}).
\end{eqnarray*}
The condition (1) implies that $T_{\Gamma}$ is well defined and $\|T_{\Gamma}\|_{\rm op}\leq \sqrt{\lambda}+\sqrt{B}$.
Moreover, by condition (2), for each $x\in \mathcal{H}$, we have
\begin{align*}
\|x-M_{U,\Gamma,\Lambda^{d}}x\|&=\|\sum_{i\in I}(\Lambda^{*}_{i}-\Gamma_{i}^{*}u_{i})( \Lambda_{i}^{d}(x))\|\\
&\leq\sum_{i\in I}\|\Lambda^{*}_{i}-\Gamma_{i}^{*}u_{i}\| \|\Lambda_{i}^{d}(x)\| \|x\|\\
&=\mu \|x\|.
\end{align*}
This means that $M_{U,\Gamma,\Lambda^{d}}$ is invertible and $\frac{1}{1+\mu}
\leq\|M^{-1}\|\leq\frac{1}{1-\mu}$.
Particularly, the invertibility of $M$ implies that every $x\in \mathcal{H}$ can be written as
\begin{eqnarray*}
x=MM^{-1}x=\sum_{i\in I}\Gamma_{i}^{*}u_{i}(\Lambda_{i}^{d}(M^{-1}x)).
\end{eqnarray*}
This leads to
\begin{align*}
\|x\|^{2}=\langle x,x\rangle &= \langle \sum_{i\in I}\Gamma_{i}^{*}u_{i}(\Lambda_{i}^{d}(M^{-1}x)),x\rangle\\
&\leq\sum_{i\in I}\langle \Lambda^{d}_{i}(M^{-1}x),u_{i}^{*}\Gamma_{i}x\rangle\\
&\leq\left(\sum_{i\in I} \|\Lambda^{d}_{i}(M^{-1}x)\|^{2} \right) ^{{\frac{1}{2}}}\left(\sum_{i\in I} \|u_{i}^{*}\Gamma_{i}x\|^{2}\right)^{\frac{1}{2}}\\
&\leq\sqrt{D}\|M^{-1}x\|\left(\|\sum_{i\in I}u_{i}^{*}\Gamma_{i}x\|^{2}\right)^{\frac{1}{2}}\\
&\leq\sqrt{D} \frac{1}{1-\mu}\|x\|\|U^{*}T^{*}_{\Gamma}x\|\\
&\leq\sqrt{D} \frac{1}{1-\mu}\|x\|\|U^{*}\|\|T^{*}_{\Gamma}x\|.
\end{align*}
Therefore,
\begin{eqnarray*}
\frac{1}{\parallel U\parallel D}(1-\mu)^{2}\parallel f\parallel ^{2}\leq  \parallel T^{*}_{\Gamma}f\parallel
\end{eqnarray*}
This completes the proof.
\end{proof}


\end{document}